%
%
%

\documentclass[graybox]{svmult}

\usepackage{amsfonts,amsmath,amstext,latexsym,amssymb}
\usepackage{mathptmx}       
\usepackage{helvet}         
\usepackage{courier}        
\usepackage{type1cm}        
\usepackage{cite}                            
\usepackage{makeidx}         
\usepackage{graphicx}        
\usepackage{multicol}        
\usepackage[bottom]{footmisc}
\makeindex             
\usepackage{enumitem}  
\usepackage{calc}
\setlist{labelindent=1pt,itemsep=0.1cm}
\setlist[itemize]{leftmargin=0.5cm}
\setlist[enumerate]{itemindent=0em,leftmargin=0.5cm}
\setlist[enumerate,1]{label={\upshape\arabic*)}}





\usepackage{float} 

%

\smartqed

\allowdisplaybreaks

\newtheorem{thm}{Theorem}[section]
\newtheorem{cor}[thm]{Corollary}
\newtheorem{ex}[thm]{Example}

\newcommand{\di}{\displaystyle}

\DeclareMathOperator{\esssup}{ess\;sup}


\begin{document}

\title*{Differential and linear integral operators representing polynomial covariance commutation relations in $C^\infty$}
\titlerunning{Differential and integral operators representing covariance commutation relations}
\author{Domingos Djinja, Sergei Silvestrov, Alex Behakanira Tumwesigye}
\institute{Domingos Djinja \at Department of Mathematics and Informatics, Faculty of Sciences, Eduardo Mondlane University, Box 257, Maputo, Mozambique. \email{domingos.djindja@uem.ac.mz}
\at Division of Mathematics and Physics, UKK, M\"{a}lardalens University, Box 883, V\"{a}ster\.{a}s, Sweden.
\and Sergei Silvestrov \at Division of Mathematics and Physics, The School of Education, Culture and Communication, M\"{a}lardalens University, Box 883, V\"{a}ster\.{a}s, Sweden. \email{sergei.silvestrov@mdu.se}
\and Alex Behakanira Tumwesigye \at Department of Mathematics, College of Natural Sciences, Makerere University, Box 7062, Kampala, Uganda. \email{alex.tumwesgye@mak.ac.ug}}
%
%

	
\maketitle\label{chapDjinjaSilvestrovTumwesigyeVolteraops}

\abstract*{Representations of polynomial covariance commutation relations by pairs of linear integral and differential operators
are constructed in the space of infinitely continuously differentiable functions.
Representations of polynomial covariance commutation relations by pairs consisting of a differential and linear integral operator are considered including conditions on kernels and coefficients of operators, and examples.
\keywords{integral operator, covariance commutation relation} \\
{\bf MSC 2020:} 47G10, 47L80, 47L65, 47L10}

\abstract{Representations of polynomial covariance commutation relations by pairs of linear integral and differential operators
are constructed in the space of infinitely continuously differentiable functions.
Representations of polynomial covariance commutation relations by pairs consisting of a differential and linear integral operator are considered including conditions on kernels and coefficients of operators, and examples.
\keywords{integral operator, covariance commutation relation, differential operator} \\
{\bf MSC 2020:} 47G10, 47L80, 47L65, 47L10}

\section{Introduction}

In many areas of applications there may be found relations of the form
\begin{gather} \label{covCommutationRelation}
  AB=B F(A)
\end{gather}
where $A, B$ are elements of
an associative algebra and  $F:\mathbb{R}\to\mathbb{R}$ is a function satisfying certain conditions.
Such commutation relations are usually called covariance relations, crossed product relations or semi-direct product relations. Elements of an algebra that satisfy \eqref{covCommutationRelation} are called a representation of this relation.  Representations of covariance commutation relations \eqref{covCommutationRelation} by linear operators are important for the study of actions and induced representations of groups and semigroups, crossed product operator algebras, dynamical systems, harmonic analysis, wavelets and fractals analysis and, hence have applications in physics and engineering
\cite{BratJorgIFSAMSmemo99,BratJorgbook,JorgWavSignFracbook,JorgOpRepTh88,JorMoore84,MACbook1,MACbook2,MACbook3,OstSambook,Pedbook79,Samoilenkobook}.
A pair $(A,B)$ of elements in the corresponding
associative algebra that satisfies \eqref{covCommutationRelation} is called a representation of this relation
\cite{Samoilenkobook}.
Algebraic properties of the commutation relation \eqref{covCommutationRelation} are  important in description of properties of its representations. For instance, there is a well known link between linear operators satisfying the commutation  relation \eqref{covCommutationRelation} and spectral theory \cite{Samoilenkobook}.

A description of the structure of representations for the relation \eqref{covCommutationRelation} and more general families of self-adjoint operators satisfying such relations by bounded and unbounded self-adjoint linear operators on a Hilbert space using reordering formulas for functions of the algebra elements and operators satisfying covariance commutation relation, functional calculus and spectral representation of operators and interplay with dynamical systems generated by iteration of maps involved in the commutation relations have been considered in  \cite{BratEvansJorg2000,CarlsenSilvExpoMath07,CarlsenSilvAAM09,CarlsenSilvProcEAS10,DutkayJorg3,DJS12JFASilv,DLS09,DutSilvProcAMS,DutSilvSV,JSvT12a,JSvT12b,Mansour16,JMusondaPhdth18,JMusonda19,Musonda20,Nazaikinskii96,OstSambook,PerssonSilvestrov031,PerssonSilvestrov032,PersSilv:CommutRelDinSyst,RST16,RSST16,Samoilenkobook,SaV8894,SilPhD95,STomdynsystype1,SilWallin96,SvSJ07a,SvSJ07b,SvSJ07c,SvT09,Tomiyama87,Tomiama:SeoulLN1992,Tomiama:SeoulLN2part2000,AlexThesis2018,VaislebSa90}.


Constructions of representations of polynomial covariance commutations relations by pairs of  linear integral operators with general kernels, linear integral operators with separable kernels and linear multiplication operator  defined on some Banach spaces have been considered in \cite{DjinjaEtAll_IntOpOverMeasureSpaces,DjinjaEtAll_LinItOpInGenSepKern,DjinjaEtAll_LinMultIntOp}.

In this paper, we construct representations of \eqref{covCommutationRelation} by pairs of  linear integral and differential operators in the space of infinitely continuously differentiable functions. Such representations
can also be viewed as solutions for integro differential operator equations $AX=XF(A)$, when $A$ is specified or
$XB=BF(X)$ when $B$ is specified.    \par

This paper is organized in three sections. After the introduction, we present  preliminaries in Section \ref{SecPreNot}.
  In Section \ref{SecIntOpDiffOp} we present results of representations of commutation relation \eqref{covCommutationRelation} by pairs of differential and  linear integral operator.

%

\section{Preliminaries and notations}\label{SecPreNot}
Let $\mathbb{R}$ be the set of all real numbers,  $ X$ be a non-empty space, and let $S\subseteq X$, $(S,\Sigma, \mu)$ be a $\sigma$-finite measure space, where $\Sigma$ is a $\sigma-$algebra with measurable subsets of $S$,
and $S$ can be covered with at most countable many disjoint sets $E_1,E_2,E_3,\ldots$ such that $ E_i\in \Sigma, \,
\mu(E_i)<\infty$, $i=1,2,\ldots$  and $\mu$ is a measure.
For $1\leq p<\infty,$ the linear space $L_p(S,\mu)$ of all classes of equivalent (different at most on a set of zero measure)
measurable functions $f:S\to \mathbb{R}$ such that
$\int\limits_{S} |f(t)|^p d\mu < \infty,$ is a Banach space (Hilbert space when $p=2$) with norm
$\| f\|_p= \left( \int\limits_{S} |f(t)|^p dt \right)^{\frac{1}{p}}.$
The linear space $L_\infty(S,\mu)$ of all classes of equivalent (different at most on a set of zero measure) measurable functions $f:S\to \mathbb{R}$ such that, $|f(t)|\leq \lambda$ for some $\lambda=\lambda_f>0$, almost everywhere. This is a Banach space with norm
$\|f\|_{\infty}=\mathop{\esssup}\limits_{t\in S} |f(t)|.$
The support of a function $f:\, X\to\mathbb{R}$ is the set
${\rm supp \, } f = \{t\in X:\, f(t)\not=0\}$.
We will also use the following notation
$ Q_{G}(u,v)=\int\limits_{G} u(t)v(t)d\mu$
where $G\in \Sigma$ and $u,v$ are such functions $u,v:\, G\to \mathbb{R} $ that integral on the right hand side exist and is finite.
(sometimes, $G$ or $v,u$ will be omitted for convenience of exposition).
For $\alpha, \beta \in \mathbb{R}$, the linear space of all continuous functions $f:[\alpha,\beta]\to \mathbb{R}$ is a Banach
space with norm $\|f\|=\max\limits_{t\in [\alpha,\beta]} |f(t)|$.
The linear space of all infinitely continuously differentiable functions  $f:[\alpha,\beta]\to \mathbb{R}$ is denoted by $C^{\infty}[\alpha,\beta]$. For more details on these and other related basic definitions and notations we refer to \cite{AdamsG,AdamsSobolevSpaces,BrezisFASobolevSpaces, FollandRA,Kantarovitch,Kolmogorov,KolmogorovVol2,RudinRCA, RynneLFA}.


We present a useful lemma for continuous linear functionals.
\begin{lemma}\label{LemmaCLFVanishCompleteSetVanishUniverse}
Let $E$ be a normed vector space. Let $H$ be a continuous linear functional defined on $E$ and
$M\subseteq E$ be such that  cl(span($M$))=$E$, where cl$(\Pi)$ is the closure of the set $\Pi$. If $H(x)=0$ for all $x\in M$ then
$H(x)=0$ for all $x\in E$.
\end{lemma}

\begin{proof}
Let $x\in $ span$(M)$, such that $x=\sum\limits_{i=1}^{m} \alpha_i x_i$, where $x_i\in M$, $\alpha_i\in\mathbb{R}$, for all $1\leq i\leq m$. Then, by using linearity of $H$ we have
\begin{equation*}
  H(x)=H\left(\sum\limits_{i=1}^{m} \alpha_i x_i\right)=\sum\limits_{i=1}^{m}\alpha_i H(x_i)=0.
\end{equation*}
Then $H(x)=0$ for all $x\in$ span($M$). Suppose that $y\in E$, and since cl(span$(M)$)$=E$, there exist
elements $y_n\in$ span($M$) such that $y_n\to y$, when $n\to +\infty$, that is, $\|y_n-y\|_E\to 0,$
$n\to +\infty$. Therefore, by using linearity and continuity of $H$ we have
\begin{equation*}
  H(y)=H\left(\lim_{n\to +\infty} y_n\right)=\lim_{n\to+\infty} H(y_n)=0.
\end{equation*}
Hence, $H(y)=0$ for all $y\in E$, that is, $H$ is the zero functional.
\qed\end{proof}

\section{Representations by pairs of integral and differential operator}\label{SecIntOpDiffOp}
Let
\begin{gather}\label{SetDCinfinityXofalphabetazero}
D_{[\alpha,\beta]}=\left\{ x\in C^{\infty}[\alpha,\beta]:\ x(\alpha)=x(\beta)=0  \right\},
\end{gather}
where $C^{\infty}[\alpha,\beta]$ is the space of infinitely continuously differentiable functions in $[\alpha,\beta]$.

\begin{theorem}\label{thmIntDiffOp}
Consider linear operators $A:C^\infty[\alpha,\beta]\to C^\infty[\alpha,\beta]$,  $B:C^\infty[\alpha,\beta]\to C^\infty[\alpha,\beta]$
defined as follows
\begin{gather*}
  (Ax)(t)= \int\limits_{\alpha}^{\beta} k(t,s)x(s)ds,\quad (Bx)(t)= b(t)\frac{dx}{dt},
\end{gather*}
where $\alpha,\beta$ are real numbers, $k(\cdot, \cdot)\in C^{\infty}([\alpha,\beta]^2)$, $ b(\cdot)\in C^{\infty}[\alpha,\beta]$.
Consider a polynomial $F(t)=\sum\limits_{i=0}^{n}\delta_i t^i$, $\delta_0,\ldots,\delta_n\in \mathbb{R}$. Let
\begin{eqnarray*}
  k_0(t,s)&=&k(t,s),\quad k_1(t,s)=\int\limits_{\alpha}^{\beta} k(t,\tau)k(\tau,s)d\tau,\\
  k_m(t,s)&=&\int\limits_{\alpha}^{\beta} k(t,\tau)k_{m-1}(\tau,s)d\tau,\quad m=1,\ldots,n, \\
  F_m(k(t,s))&=&\sum_{j=1}^{m} \delta_j k_{j-1}(t,s),\quad m=1,\ldots,n.
\end{eqnarray*}
Then  for all $x\in C^{\infty}[\alpha,\beta]$ we have  $ABx=BF(A)x $ if and only if
\begin{eqnarray}\nonumber
& k(t,\beta)b(\beta)x(\beta)-k(t,\alpha)b(\alpha)x(\alpha) -\int\limits_{\alpha}^{\beta}\frac{\partial [b(s)k(t,s)]}{\partial s}x(s)ds \\ \label{CondAdiffBInfOPInWholeSpace}
& =
\delta_0 b(t)x'(t)+ b(t)\int\limits_{\alpha}^{\beta}\frac{\partial F_n(k(t,s))}{\partial t}x(s)ds.
\end{eqnarray}

If $x\in D_{[\alpha,\beta]}$,  where $D_{[\alpha,\beta]}$ is given in \eqref{SetDCinfinityXofalphabetazero}, then condition \eqref{CondAdiffBInfOPInWholeSpace} is equivalent to
\begin{gather}\label{CondAdiffBIntOp}
    -\int\limits_{\alpha}^{\beta}\frac{\partial [b(s)k(t,s)]}{\partial s}x(s)ds=\delta_0 b(t)x'(t) + b(t)\int\limits_{\alpha}^{\beta}\frac{\partial F_n(k(t,s))}{\partial t}x(s)ds.
\end{gather}
In particular, if $\delta_0=0$, that is, $F(t)=\sum\limits_{i=1}^{n}\delta_i t^i$ then condition \eqref{CondAdiffBIntOp} can be simplified, that is, for all $x\in D_{[\alpha,\beta]}$ the equality $(ABx)(t)=(BF(A)x)(t)$ holds if and only if
      \begin{gather}\label{SystemEqIntDiffOp1}
       -\frac{\partial [b(s)k(t,s)]}{\partial s}=b(t)\frac{\partial F_n(k(t,s))}{\partial t},\quad (t,s)\in [\alpha,\beta]^2.
        \end{gather}
\end{theorem}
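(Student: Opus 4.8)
The plan is to compute both sides $ABx$ and $BF(A)x$ explicitly and compare them, then handle the restriction to $D_{[\alpha,\beta]}$ and finally extract the pointwise identity by a density argument.

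First I would compute $(ABx)(t)$. Since $(Bx)(s)=b(s)x'(s)$, substituting into the integral operator gives $(ABx)(t)=\int_\alpha^\beta k(t,s)b(s)x'(s)\,ds$. Integrating by parts in $s$ (legitimate because $k(t,\cdot)$, $b$ and $x$ are all $C^\infty$ on the compact interval) produces the boundary terms $k(t,\beta)b(\beta)x(\beta)-k(t,\alpha)b(\alpha)x(\alpha)$ together with $-\int_\alpha^\beta \frac{\partial[b(s)k(t,s)]}{\partial s}x(s)\,ds$, which is exactly the left-hand side of \eqref{CondAdiffBInfOPInWholeSpace}. Next I would establish by induction the iterated-kernel formula $(A^m x)(t)=\int_\alpha^\beta k_{m-1}(t,s)x(s)\,ds$ for $m\geq 1$, using Fubini's theorem (justified by continuity on the compact square) to collapse the repeated integrals into the kernels $k_{m-1}$ of the statement. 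Summing against the coefficients $\delta_i$ and separating the $i=0$ term yields $(F(A)x)(t)=\delta_0 x(t)+\int_\alpha^\beta F_n(k(t,s))x(s)\,ds$. Applying $B$ and differentiating under the integral sign (again justified by smoothness and compactness) gives the right-hand side of \eqref{CondAdiffBInfOPInWholeSpace}, and equating the two expressions establishes the first equivalence for all $x\in C^\infty[\alpha,\beta]$.

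The reduction to $D_{[\alpha,\beta]}$ is immediate: for $x$ with $x(\alpha)=x(\beta)=0$ the two boundary terms vanish, leaving \eqref{CondAdiffBIntOp}. Setting $\delta_0=0$ removes the $\delta_0 b(t)x'(t)$ term, so \eqref{CondAdiffBIntOp} becomes the assertion that $\int_\alpha^\beta g_t(s)x(s)\,ds=0$ for all $x\in D_{[\alpha,\beta]}$, where $g_t(s)=-\frac{\partial[b(s)k(t,s)]}{\partial s}-b(t)\frac{\partial F_n(k(t,s))}{\partial t}$.

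The direction I expect to be the only real obstacle is showing that this integral condition forces the pointwise identity \eqref{SystemEqIntDiffOp1}; the converse is trivial, since equality of the kernels makes the integrals coincide. For fixed $t$ I would regard $H(x)=\int_\alpha^\beta g_t(s)x(s)\,ds$ as a continuous linear functional on $L_2[\alpha,\beta]$, bounded by $\|g_t\|_2$, which is finite because $g_t$ is continuous. Since $D_{[\alpha,\beta]}$ contains $C_c^\infty(\alpha,\beta)$, its closure in the $L_2$-norm is all of $L_2[\alpha,\beta]$, so Lemma \ref{LemmaCLFVanishCompleteSetVanishUniverse} forces $H\equiv 0$; evaluating at $x=g_t$ gives $\|g_t\|_2^2=0$, whence $g_t\equiv 0$ by continuity. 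As this holds for every $t$ and both sides of \eqref{SystemEqIntDiffOp1} are continuous in $(t,s)$, the pointwise identity follows on all of $[\alpha,\beta]^2$.
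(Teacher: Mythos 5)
Your proposal is correct and follows essentially the same route as the paper's proof: integration by parts for $(ABx)(t)$, induction with Fubini for the iterated kernels $(A^m x)(t)=\int_\alpha^\beta k_{m-1}(t,s)x(s)\,ds$, vanishing of boundary terms on $D_{[\alpha,\beta]}$, and a density argument (citing that $C_c^\infty(\alpha,\beta)\subset D_{[\alpha,\beta]}$ is dense in $L_2$, which is exactly the paper's appeal to Brezis, Corollary 4.23, combined with Lemma \ref{LemmaCLFVanishCompleteSetVanishUniverse}) to pass from the integral identity to the pointwise kernel identity. Your write-up merely makes explicit the final step (evaluating the vanishing functional at $x=g_t$ and using continuity of $g_t$) that the paper leaves implicit in its citations.
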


\begin{proof}
Operators $A$, $B$ are well defined and so are the compositions $AB$, $BA$, $A^n$, $BA^n$ for any nonnegative integer $n$.
By applying Fubini Theorem on changing the order of integration in the composition and proceeding by induction, we get
  \begin{gather*}
  (A^nx)(t)=\int\limits_{\alpha}^{\beta} k_{n-1}(t,s)x(s)ds,\quad n\in\mathbb{N},
  \end{gather*}
  where
  \begin{eqnarray*}
  k_1(t,s)&=&\int\limits_{\alpha}^{\beta} k(t,\tau)k(\tau,s)d\tau, \quad k_0(t,s)=k(t,s),\\
  k_n(t,s)&=&\int\limits_{\alpha}^{\beta} k(t,\tau)k_{n-1}(\tau,s)d\tau,\quad n\in\mathbb{N} .
  \end{eqnarray*}
Thus we have
  \begin{eqnarray*}
   (F(A)x)(t)&=&\delta_0 x(t)+ \sum\limits_{j=1}^{n} \delta_j (A^j x)(t)=\delta_0 x(t)+\sum\limits_{j=1}^{n} \delta_{j} \int\limits_{\alpha}^{\beta} k_{j-1}(t,s)x(s)ds \\
   &=&\delta_0 x(t)+\int\limits_{\alpha}^{\beta} F_n(k(t,s))x(s)ds,
   \end{eqnarray*}
 where
 \begin{gather*}
   F_n(k(t,s))=\sum_{j=1}^{n} \delta_j k_{j-1}(t,s),\quad n\in\mathbb{N}
  \end{gather*}
  and
  \begin{eqnarray*}
    (BF(A)x)(t)&=&\delta_0 b(t)x'(t)+ b(t)\int\limits_{\alpha}^{\beta}\frac{\partial F_n(k(t,s))}{\partial t}x(s)ds\\
    (ABx)(t)&=&\int\limits_{\alpha}^{\beta} k(t,s)(Bx)(s)ds=\int\limits_{\alpha}^{\beta} k(t,s)b(s)x'(s)ds
      \\
 &=& k(t,\beta)b(\beta)x(\beta)-k(t,\alpha)b(\alpha)x(\alpha)-\int\limits_{\alpha}^{\beta}\frac{\partial [b(s)k(t,s)]}{\partial s}x(s)ds.
  \end{eqnarray*}
 Then, for $x\in C^{\infty}[\alpha,\beta]$ we have $(ABx)(t)=(BF(A)x)(t)$ if and only if condition \eqref{CondAdiffBInfOPInWholeSpace} holds.
 Since  for $ x(\cdot)\in D_{[\alpha,\beta]}$ we have $x(\beta)=x(\alpha)=0$, then for all $x\in D_{[\alpha,\beta]}$ the equality $(ABx)(t)=(BF(A)x)(t)$ holds true if and only if condition \eqref{CondAdiffBIntOp} also holds.
If $\delta_0=0$ then condition \eqref{CondAdiffBIntOp} reduces to the following:
 \begin{gather}\label{EqIntBothSidesRepreADiffBIntOp}
 \forall\, x\in D_{[\alpha,\beta]}:\quad  -\int\limits_{\alpha}^{\beta}\frac{\partial [b(s)k(t,s)]}{\partial s}x(s)ds=b(t)\int\limits_{\alpha}^{\beta}\frac{\partial F_n(k(t,s))}{\partial t}x(s)ds.
 \end{gather}
It follows from \cite[Corollary 4.23]{BrezisFASobolevSpaces} and Lemma \ref{LemmaCLFVanishCompleteSetVanishUniverse} that condition \eqref{EqIntBothSidesRepreADiffBIntOp} is equivalent to
\begin{gather*}
  -\frac{\partial [b(s)k(t,s)]}{\partial s}=b(t)\frac{\partial F_n(k(t,s))}{\partial t},\quad (t,s)\in\, [\alpha,\beta]^2. \tag*{\qed}
\end{gather*}
\end{proof}

\begin{lemma}\label{LemmaContinuousFunctionsSupportMeasureZeroIntOpDiffOP}
Let $a,b\in C[\alpha,\beta]$, $\alpha,\beta\in\mathbb{R}$. We put
\begin{gather*}
  \Omega_{ab}={\rm supp }\,a\, \cap \, {\rm supp }\, b,
\end{gather*}
where ${\rm supp}\, a$ is the support of the function $a$.
If the set $\Omega_{ab}$ has Lebesgue measure zero then it is empty.
\end{lemma}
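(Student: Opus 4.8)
The plan is to exploit the fact that, under the definition of support used in this paper, namely ${\rm supp}\, f = \{ t : f(t) \neq 0 \}$, the set $\Omega_{ab}$ is \emph{open} in the relative topology of $[\alpha,\beta]$, and then to invoke the elementary fact that a nonempty relatively open subset of $[\alpha,\beta]$ (with $\alpha < \beta$) has strictly positive Lebesgue measure. I would argue by contraposition, showing that if $\Omega_{ab}$ is nonempty then its measure is positive, which contradicts the hypothesis.

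First I would observe that since $a$ and $b$ are continuous on $[\alpha,\beta]$ and $\mathbb{R} \setminus \{0\}$ is open, the preimages ${\rm supp}\, a = a^{-1}(\mathbb{R}\setminus\{0\})$ and ${\rm supp}\, b = b^{-1}(\mathbb{R}\setminus\{0\})$ are relatively open subsets of $[\alpha,\beta]$. Consequently $\Omega_{ab} = {\rm supp}\, a \cap {\rm supp}\, b$, being an intersection of two relatively open sets, is itself relatively open in $[\alpha,\beta]$.

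Next, assuming $\Omega_{ab} \neq \emptyset$, I would pick $t_0 \in \Omega_{ab}$. By relative openness there is $\varepsilon > 0$ with $(t_0 - \varepsilon, t_0 + \varepsilon) \cap [\alpha,\beta] \subseteq \Omega_{ab}$. Since $\alpha < \beta$, this relatively open neighbourhood contains a nondegenerate subinterval of $[\alpha,\beta]$: a two-sided interval if $t_0$ is interior, and a one-sided interval of the form $[\alpha, \alpha+\delta)$ or $(\beta-\delta, \beta]$ if $t_0$ is an endpoint. Any such interval has strictly positive Lebesgue measure, hence so does $\Omega_{ab}$, contradicting the hypothesis that its measure is zero. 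Therefore the assumption $\Omega_{ab} \neq \emptyset$ fails and $\Omega_{ab} = \emptyset$.

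The argument is short, and the only real point to watch is the definition of support: because the support is taken here to be the nonvanishing set rather than its closure, $\Omega_{ab}$ is genuinely open, and this is exactly what makes the measure-zero hypothesis force emptiness. The only mild technicality, handled above, is the endpoint case, where one works with one-sided relative neighbourhoods; this relies on the standing assumption $\alpha < \beta$ so that $[\alpha,\beta]$ is a nondegenerate interval (for $\alpha = \beta$ the statement would fail, since a single point has measure zero but need not be empty as an intersection of supports).
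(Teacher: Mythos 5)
Your proof is correct and follows essentially the same route as the paper's: both argue that a point of $\Omega_{ab}$ yields, by continuity of $a$ and $b$, a relatively open neighbourhood on which both functions are nonvanishing, hence a set of positive Lebesgue measure, contradicting the hypothesis. The only difference is cosmetic: you phrase the continuity step via relative openness of the supports and treat the endpoint cases explicitly, whereas the paper restricts to an interior point ``without loss of generality.''
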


\begin{proof}
  Without loss of generality we suppose that there exists $\alpha_0\in (\alpha,\beta)$ such that
  $\alpha_0\in \Omega_{ab}$, that is, $\Omega_{ab}$ is not empty. Then, $a(\alpha_0)\not=0$ and $b(\alpha_0)\not=0$.
  Since $a,b$ are continuous functions, there is an open interval $V_{\alpha_0}=(\alpha_0-\varepsilon,\alpha_0+\varepsilon)\subset (\alpha,\beta)$, for some $\varepsilon>0$ such that for all $t\in V_{\alpha_0}$,
     $
       a(t)\not=0, b(t)\not=0.
     $
    Then  the Lebesgue measure of $\Omega_{ab}$ is positive. This contradicts the hypothesis. Then, $\Omega_{ab}=\emptyset$. \qed
\end{proof}

\begin{cor}\label{corCorrespondingthmIntDiffOp1.1}
Consider linear operators $A:C^\infty[\alpha,\beta]\to C^\infty[\alpha,\beta]$,  $B:C^\infty[\alpha,\beta]\to C^\infty[\alpha,\beta]$  defined as follows
\begin{equation}\label{IntegralOperatorDegeneratedKernelDiffOp}
  (Ax)(t)= \int\limits_{\alpha}^{\beta} a(t)c(s)x(s)ds,\quad (Bx)(t)= b(t)\frac{dx}{dt},
\end{equation}
where $\alpha,\beta\in \mathbb{R}$ and $a, b, c:[\alpha,\beta]\to\mathbb{R}$ belong to $C^\infty[\alpha,\beta]$.
Let $F(t)=\sum\limits_{i=1}^{n}\delta_i t^i$, where
$\delta_1,\ldots,\delta_n \in \mathbb{R}$. Set
\begin{gather}\label{SetOmegaaOmegacProofCorAdiffBIntOp}
  \Omega_a=\{t\in[\alpha,\beta]:\, a(t)\not=0,\ a'(t)\not=0\},\quad \Omega_c=\{s\in[\alpha,\beta]:\, c(s)\not=0\}, \\
  \label{constK1ProofCorAdiffBIntOp}
  k_1=\sum\limits_{k=1}^{n} \delta_k \big(\int\limits_{\alpha}^{\beta} a(s)c(s)ds\big)^{k-1}.
\end{gather}
 Suppose that the function $a$ is given and  $\Omega_a=\Omega_c$ with exception perhaps of countably many points in $[\alpha,\beta]$. Then,
  for all $x\in D_{[\alpha,\beta]}$, where $D_{[\alpha,\beta]}$ is given in \eqref{SetDCinfinityXofalphabetazero}, that is $D_{[\alpha,\beta]}=\{x\in C^\infty[\alpha,\beta]:\, x(\alpha)=x(\beta)=0\}$, the equality $ABx=BF(A)x$ is satisfied
if and only if one of the following is true:
\begin{enumerate}[label=\textup{\arabic*.}, ref=\arabic*]
        \item if $k_1=0$ and there exists $t\in [\alpha,\beta]$ such that $a(t)\not=0$ then $b(s)c(s)=\gamma_0$ for some real constant $\gamma_0$.
       \item if $k_1\not=0$ then the following conditions are fulfilled
     \begin{enumerate}[label=\textup{\alph*)}]
         \item
     if $(t,s)\in \Omega_a\times \Omega_c$ then
   \begin{gather*}
     b(t)=\frac{a(t)\lambda}{k_1a'(t)},\\
     c(s)=c_{\xi_0}(s)={\rm exp}\big(\di\int\limits_{[\xi_0,s]\cap\,\Omega_a\,\cap\, \Omega_c} \frac{-k_1a'(\tau)^2+a(\tau)a''(\tau)-a'(\tau)^2}{a(\tau)a'(\tau)}d\tau\big),
   \end{gather*}
    for some  $\lambda\in\mathbb{R}\setminus\{0\}$ and some $\xi_0\in \Omega_a\cap \Omega_c$.

    \item if $(t,s)\in \Omega_a\times ([\alpha,\beta]\setminus \Omega_c)$ then  the set
    \begin{gather*}
       {\rm supp }\, c'\ \cap \, {\rm supp}\, b\, \cap\, [\alpha,\beta]\setminus \Omega_c  =\emptyset .
    \end{gather*}

    \item if $(t,s)\in ([\alpha,\beta]\setminus\Omega_a)\times \Omega_c$ then either the set
    \begin{gather*}
      {\rm supp }\, a' \, \cap \, {\rm supp}\, b\, \cap\,[\alpha,\beta]\setminus\Omega_a=\emptyset
    \end{gather*}
     or ${\rm supp }\,a'\, \setminus\, {\rm supp}\, a\not=\emptyset$ implies $b(s)c(s)=\gamma_0$ for some constant $\gamma_0\in\mathbb{R}$.

    \item if $(t,s)\in ([\alpha,\beta]\setminus\Omega_a)\times ([\alpha,\beta]\setminus\Omega_c)$ then
          ${\rm supp }\,a'\, \setminus\, {\rm supp}\, a\not=\emptyset$ implies that
          \begin{gather*}
             {\rm supp }\, c' \, \cap \, {\rm supp}\, b\, \cap\,[\alpha,\beta]\setminus\Omega_c=\emptyset.
          \end{gather*}
   \end{enumerate}
  \end{enumerate}
\end{cor}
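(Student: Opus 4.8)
The plan is to specialize Theorem~\ref{thmIntDiffOp} to the degenerate kernel $k(t,s)=a(t)c(s)$ and then reduce everything to the analysis of a single scalar identity. First I would compute the iterated kernels: since $k(t,s)=a(t)c(s)$, an easy induction gives $k_m(t,s)=a(t)c(s)\,I^m$ with $I=\int_\alpha^\beta a(\tau)c(\tau)\,d\tau$, so that $F_n(k(t,s))=a(t)c(s)\sum_{j=1}^n\delta_j I^{j-1}=k_1\,a(t)c(s)$, with $k_1$ the constant from \eqref{constK1ProofCorAdiffBIntOp}. Since $\delta_0=0$, Theorem~\ref{thmIntDiffOp} says that for $x\in D_{[\alpha,\beta]}$ the equality $ABx=BF(A)x$ holds if and only if \eqref{SystemEqIntDiffOp1} holds. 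Using $\partial_s[b(s)a(t)c(s)]=a(t)\,(b(s)c(s))'$ and $\partial_t[k_1 a(t)c(s)]=k_1 a'(t)c(s)$, condition \eqref{SystemEqIntDiffOp1} collapses to the master identity
\begin{equation*}
-a(t)\big(b(s)c(s)\big)'=k_1\,a'(t)\,b(t)\,c(s),\qquad (t,s)\in[\alpha,\beta]^2.\tag{$\star$}
\end{equation*}
All four cases of the corollary are statements about $(\star)$, so the proof is the dissection of $(\star)$.

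If $k_1=0$, then $(\star)$ reads $a(t)\,(b(s)c(s))'=0$ for all $(t,s)$; choosing $t$ with $a(t)\neq0$ forces $(b(s)c(s))'\equiv0$, i.e.\ $b(s)c(s)=\gamma_0$, giving item~1. For $k_1\neq0$ I would partition $[\alpha,\beta]^2$ into the four products formed from $\Omega_a,\Omega_c$ and their complements. The hypothesis $\Omega_a=\Omega_c$ off a countable set ensures that on $\Omega_a$ all of $a,a',c$ are nonzero, so on $\Omega_a\times\Omega_c$ one may divide $(\star)$ by $a(t)c(s)$ to obtain
\[
-\frac{(b(s)c(s))'}{c(s)}=k_1\frac{a'(t)\,b(t)}{a(t)}.
\]
The left side depends only on $s$ and the right only on $t$, so both equal a common constant $\lambda$ (nonzero for a nontrivial $b$). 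The $t$-equation gives $b(t)=\lambda a(t)/(k_1 a'(t))$; substituting this into the $s$-equation yields the first-order linear ODE
\[
\frac{c'(s)}{c(s)}=\frac{-k_1 a'(s)^2+a(s)a''(s)-a'(s)^2}{a(s)a'(s)},
\]
whose integration over $[\xi_0,s]\cap\Omega_a\cap\Omega_c$, with the normalization $c(\xi_0)=1$, produces exactly the exponential formula for $c_{\xi_0}$ in item~2a).

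For the degenerate regions (b), (c), (d) I would substitute the relevant vanishing into $(\star)$. On $\Omega_a\times([\alpha,\beta]\setminus\Omega_c)$ the right side vanishes and $a(t)\neq0$, leaving $b(s)c'(s)=0$ off $\Omega_c$, which is the support condition of 2b). On the regions with $t\in[\alpha,\beta]\setminus\Omega_a$ one further distinguishes the subset where $a=0,\ a'\neq0$ (here $(\star)$ forces $a'(t)b(t)c(s)=0$, hence $b=0$ on $\mathrm{supp}\,a'\setminus\mathrm{supp}\,a$) from the subset where $a\neq0,\ a'=0$ (here $(\star)$ forces $(b(s)c(s))'=0$, hence $b(s)c(s)$ constant); reading these through the continuity of $a,b,c$ and the non-closure definition of support gives the alternatives in 2c) and 2d). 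Lemma~\ref{LemmaContinuousFunctionsSupportMeasureZeroIntOpDiffOP} is the tool that lets one pass between ``the relevant support intersection has measure zero'' and ``it is empty.''

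The main obstacle is not any individual computation but the bookkeeping that makes the four-region analysis an honest if-and-only-if. Extracting the pointwise constraints on each open region is routine; the care lies in (i) checking \emph{sufficiency}, namely that the four families of conditions together reconstruct $(\star)$ on all of $[\alpha,\beta]^2$, including the exceptional countable set, which is handled by continuity; and (ii) correctly translating each pointwise vanishing into the precise support-intersection statement, keeping track of which of $a$, $a'$, $c$ vanishes on which piece. This combinatorial organization of the degenerate cases, rather than the ODE of item~2a), is where the real work sits.
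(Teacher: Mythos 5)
Your proposal is correct and follows essentially the same route as the paper's proof: the same computation of the iterated kernels $k_m(t,s)=a(t)c(s)\big(\int_\alpha^\beta a\,c\big)^m$, the same reduction via Theorem~\ref{thmIntDiffOp} to the pointwise identity $-a(t)(b(s)c(s))'=k_1a'(t)b(t)c(s)$, the same split into $k_1=0$ versus $k_1\not=0$, and the same four-region case analysis with separation of variables on $\Omega_a\times\Omega_c$ and Lemma~\ref{LemmaContinuousFunctionsSupportMeasureZeroIntOpDiffOP} handling the support-intersection conditions on the degenerate regions.
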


\begin{proof}
Let $k_0(t,s)=a(t)c(s)$. Further,
\begin{gather*}
  k_1(t,s)=\int\limits_{\alpha}^{\beta} k_0(t,\tau)k_0(\tau,s)d\tau=\int\limits_{\alpha}^{\beta} a(t)c(\tau)a(\tau)c(s)d\tau=a(t)c(s)\int\limits_{\alpha}^{\beta}c(\tau)a(\tau)d\tau=\\
  =k_0(t,s)\int\limits_{\alpha}^{\beta} k_0(\tau,\tau)d\tau,\\
  k_2(t,s)=\int\limits_{\alpha}^{\beta} k_0(t,\tau)k_1(\tau,s)d\tau= \int\limits_{\alpha}^{\beta} k_0(t,\tau)k_1(\tau,s)d\tau
  =k_0(t,s)\big(\int\limits_{\alpha}^{\beta} k_0(\tau,\tau)d\tau\big)^2.
\end{gather*}
Therefore, for $m= 1,\ldots,n$ we have
  \begin{gather*}
  k_m(t,s)=k_0(t,s)\big(\int\limits_{\alpha}^{\beta} k_0(\tau,\tau)d\tau\big)^m=a(t)c(s)\big(\int\limits_{\alpha}^{\beta} a(\tau)c(\tau)d\tau\big)^m.
  \end{gather*}
  It follows that,  for $m= 1,\ldots,n$ we have
  \begin{gather*}
  F_m(k(t,s))=\sum_{j=1}^{m} \delta_j k_{j-1}(t,s)=\sum_{j=1}^{m} \delta_j a(t)c(s)\big(\int\limits_{\alpha}^{\beta} a(\tau)c(\tau)d\tau\big)^{j-1}.
\end{gather*}
By applying Theorem \ref{thmIntDiffOp} we have for all $x\in D_{[\alpha,\beta]}$ the equality  $ABx=BF(A)x$ holds true if and only if the following equation is fulfilled
\begin{equation}\label{ProofCorIntDiffOpIntDiffEq}
     -a(t)[b(s)c(s)]'=c(s)b(t)a'(t)\sum\limits_{k=1}^{n} \delta_k \big(\int\limits_{\alpha}^{\beta} a(s)c(s)ds\big)^{k-1}.
   \end{equation}
   This  can be rewritten as follows
   \begin{equation}
   \label{EqualityEquivalentABBFAProofCorAdiffBIntOp}
     -a(t)(b(s)c(s))'=c(s)b(t)a'(t)k_1,
   \end{equation}
   where $k_1$ is given by \eqref{constK1ProofCorAdiffBIntOp}, that is, $\di k_1=\sum\limits_{k=1}^{n} \delta_k \big(\int\limits_{\alpha}^{\beta} a(s)c(s)ds\big)^{k-1}$.

   If $k_1=0$ then Equation \eqref{EqualityEquivalentABBFAProofCorAdiffBIntOp} reduces to $a(t)[b(s)c(s)]'=0$. This is equivalent to
   $a(t)=0$ for all $t\in [\alpha,\beta]$ or $b(s)c(s)=\gamma_0$ for some real constant $\gamma_0$. Thus, if there exists $t\in [\alpha,\beta]$ such that $a(t)\not=0$ then $b(s)c(s)=\gamma_0$ for some real constant $\gamma_0$.
   If $k_1\not=0$ then if $t\in \Omega_a $ and $s\in \Omega_c$ then Equation \eqref{EqualityEquivalentABBFAProofCorAdiffBIntOp} can be written as follows:
   \begin{equation*}
     -\frac{(b(s)c(s))'}{c(s)}=\frac{b(t)a'(t)k_1}{a(t)}=\lambda,
   \end{equation*}
   for some real constant $\lambda$. We split this in four cases:
    \begin{itemize}[leftmargin=*]
    \item case 1:
     if $(t,s)\in \Omega_a\times \Omega_c$ then we have
   \begin{equation*}
     b(t)=\frac{a(t)\lambda}{k_1a'(t)},\quad
     c(s)={\rm \exp}\big(\displaystyle\int\limits_{[\xi_0,s]\cap\,\Omega_a\,\cap\, \Omega_c} \frac{-k_1a'(\tau)^2+a(\tau)a''(\tau)-a'(\tau)^2}{a(\tau)a'(\tau)}d\tau\big),
   \end{equation*}
    where $\xi_0\in \Omega_a\cap \Omega_c$.
\item case 2: if $t\in \Omega_a\times([\alpha,\beta]\setminus \Omega_c)$, then Equation \eqref{EqualityEquivalentABBFAProofCorAdiffBIntOp} reduces to
    $a(t)[b(s)c(s)]'=0$. Since $a(t)\not=0$ for  $t\in \Omega_a$, we have $[b(s)c(s)]'=0$. This implies that $c'(s)b(s)=0$ for $s\in [\alpha,\beta]\setminus \Omega_c$, because $c(s)=0$ in this set. By applying Lemma \ref{LemmaContinuousFunctionsSupportMeasureZeroIntOpDiffOP} this is equivalent to  the set
    \begin{gather*}
       {\rm supp }\, c'\ \cap \, {\rm supp }\, b\, \cap\, [\alpha,\beta]\setminus \Omega_c  =\emptyset.
    \end{gather*}
\item case 3: if $(t,s)\in ([\alpha,\beta]\setminus\Omega_a)\times\Omega_c$ then
    either $t\not\in\, {\rm supp}\, a$ or $t\not\in\, {\rm supp}\, a'$. So we split this as follows:
     \begin{itemize}[leftmargin=*]
     \item If $t\not\in\,{\rm supp}\, a$ then
    Equation \eqref{EqualityEquivalentABBFAProofCorAdiffBIntOp} reduces to
    $c(s)b(t)a'(t)k_1=0$. Since $c(s)\not=0$,  $s\in \Omega_c$, we have $b(t)a'(t)k_1=0$. Since $k_1\not=0$, this is equivalent to  $a'(t)b(t)=0$. By applying Lemma \ref{LemmaContinuousFunctionsSupportMeasureZeroIntOpDiffOP}, this is equivalent to  the set
    \begin{gather*}
      {\rm supp }\, a' \, \cap \, {\rm supp}\, b\, \cap\,[\alpha,\beta]\setminus\Omega_a=\emptyset .
    \end{gather*}
\item If $t\not\in \, {\rm supp\,}a'\, $ then Equation \eqref{EqualityEquivalentABBFAProofCorAdiffBIntOp} reduces to the following
    $a(t)[b(s)c(s)]'=0$, for $s\in \Omega_c$. If $t\in  {\rm supp\,}a $ then this equation reduces to $[b(s)c(s)]'=0$, that is, $b(s)c(s)=\gamma_0$ for some real constant $\gamma_0$. Otherwise,
    $t\not\in  {\rm supp\,}a $ therefore the equation $a(t)(b(s)c(s))'=0$ is fulfilled.
    \end{itemize}
\item case 4: if $(t,s)\in ([\alpha,\beta]\setminus\Omega_a)\times ([\alpha,\beta]\setminus \Omega_c)$ then we split this as follows:
       \begin{itemize}[leftmargin=*]
     \item If $t\not\in\,{\rm supp}\, a$ then
    \eqref{EqualityEquivalentABBFAProofCorAdiffBIntOp} reduces to
    $c(s)b(t)a'(t)k_1=0$. Since $c(s)=0$,  $s\in [\alpha,\beta]\setminus\Omega_c$, then  \eqref{EqualityEquivalentABBFAProofCorAdiffBIntOp} is fulfilled.
    \item If $t\not\in \, {\rm supp\,}a'\, $ then Equation \eqref{EqualityEquivalentABBFAProofCorAdiffBIntOp} reduces to the following
    $a(t)[b(s)c(s)]'=0$, for $s\in [\alpha,\beta]\setminus\Omega_c$. If $t\in  {\rm supp\,}a $ then this equation reduces to $[b(s)c(s)]'=0$, that is, $b(s)c'(s)=0$, since $c(s)=0$ in $[\alpha,\beta]\setminus \Omega_c$. By applying Lemma \ref{LemmaContinuousFunctionsSupportMeasureZeroIntOpDiffOP} this is equivalent to the set
    \begin{equation*}
      {\rm supp }\, b \, \cap \, {\rm supp\, } c'\,=\emptyset .
    \end{equation*}
    Otherwise, $t\not\in  {\rm supp\,}a $ and the equation $a(t)[b(s)c(s)]'=0$ is fulfilled.
    \end{itemize}
  \end{itemize}
This completes the proof. \qed
\end{proof}

\begin{ex}{\rm
Consider linear operators $A:C^\infty[\alpha,\beta]\to C^\infty[\alpha,\beta]$,  $B:C^\infty[\alpha,\beta]\to C^\infty[\alpha,\beta]$
defined as follows
\begin{equation*}
  (Ax)(t)= \int\limits_{\alpha}^{\beta} a(t)c(s)x(s)ds,\quad (Bx)(t)= b(t)\frac{dx}{dt},
\end{equation*}
where $\alpha,\beta$ are real numbers, $a(\cdot)$, $b(\cdot)$ and $c(\cdot)$ are real constants. Then these operators satisfy the relation $ABx=BA^nx$
for $n=1,2,\cdots$ and for each $x\in D_{[\alpha,\beta]}$, where $D_{[\alpha,\beta]}$ is given in \eqref{SetDCinfinityXofalphabetazero}. In fact, it follows by Corollary \ref{corCorrespondingthmIntDiffOp1.1}, since in this case the sets
$
 \Omega_a, \ {\rm supp }\, a',\,\ {\rm supp }\, b',\,\ {\rm supp }\, c'
$
are all empty.
}\end{ex}

\begin{ex}{\rm
Consider linear operators $A:C^\infty[\alpha,\beta]\to C^\infty[\alpha,\beta]$,  $B:C^\infty[\alpha,\beta]\to C^\infty[\alpha,\beta]$ be defined as follows
\begin{equation*}
  (Ax)(t)= \int\limits_{\alpha}^{\beta} a(t)c(s)x(s)ds,\quad (Bx)(t)= b(t)\frac{dx}{dt},
\end{equation*}
where $\alpha,\beta$ are real numbers  and $a, b, c:[\alpha,\beta]\to\mathbb{R}$ belong to $C^{\infty}[\alpha,\beta]$ such that
\begin{equation*}
  \int\limits_{\alpha}^{\beta} a(s)c(s)ds=0,\quad {\rm supp}\, b \, \cap \, {\rm supp}\, c=\emptyset.
\end{equation*}
Let $F:\mathbb{R}\to \mathbb{R}$ be defined by $F(t)=\delta_1 t+\delta_2t^2+\ldots+\delta_n t^n$, where
$\delta_1,\ldots,\delta_n $ are constants.
 If $\delta_1=0$ then operators satisfy the commutation relation $  ABx=BF(A)x$ for each $x\in D_{[\alpha,\beta]}$, where $D_{[\alpha,\beta]}$ is given in \eqref{SetDCinfinityXofalphabetazero}. In fact, it follows from Corollary \ref{corCorrespondingthmIntDiffOp1.1}  in the particular case when $k_1=0$ and $b(\cdot)c(\cdot)=0$.
}\end{ex}

\begin{remark}{\rm
In Corollary \ref{corCorrespondingthmIntDiffOp1.1} implicitly there are conditions for $\xi_0$, $k_1$, $\alpha$ and $\beta$ so that we do have representations. Suppose that  functions $a(\cdot)$ and $c(\cdot)$ from operator $A$ in Corollary \ref{corCorrespondingthmIntDiffOp1.1} are such that $\Omega_a=\Omega_c=[\alpha,\beta]$, where $\Omega_a$ and $\Omega_c$ are defined in \eqref{SetOmegaaOmegacProofCorAdiffBIntOp}. Recall that $k_1$ is given by \eqref{constK1ProofCorAdiffBIntOp}, that is,
\begin{equation*}
  k_1=\sum\limits_{k=1}^{n} \delta_k \left(\int\limits_{\alpha}^{\beta} a(s)c(s)ds\right)^{k-1},
\end{equation*}
where $\delta_k\in\mathbb{R}$, $k$ is integer such that $1\le k\le n$ are coefficients of the polynomial $F(t)= \sum\limits_{k=0}^{n}\delta_k t^k$.
If $k_1\not=0$, then $c(s)={\rm exp}\left(\di\int\limits_{\xi_0}^s \frac{-k_1a'(\tau)^2+a(\tau)a''(\tau)-a'(\tau)^2}{a(\tau)a'(\tau)}d\tau\right)$.
By replacing this in \eqref{constK1ProofCorAdiffBIntOp} we get
\begin{equation}\label{EqConditioningCsi0K1AlphaBeta}
  k_1=\sum\limits_{k=1}^{n} \delta_k \left(\int\limits_{\alpha}^{\beta} a(s) {\rm exp}\left(\di\int\limits_{\xi_0}^s \frac{-k_1a'(\tau)^2+a(\tau)a''(\tau)-a'(\tau)^2}{a(\tau)a'(\tau)}d\tau\right) ds\right)^{k-1},
\end{equation}
which gives conditions to find $\xi_0$ given $k_1$, $a(\cdot)$, $\alpha$, $\beta$ and we add the condition  $\alpha\le \xi_0\le \beta$. We consider the following particular cases:
\begin{enumerate}[leftmargin=*, label=\textup{\arabic*.}, ref=\arabic*]
  \item if $a(t)=\nu_0 t^m$, where $\nu_0 \in \mathbb{R}\setminus\{0\}$ and $m\ge 2$ is a positive integer, 
      from \eqref{EqConditioningCsi0K1AlphaBeta} we get
      \begin{gather}\label{EqSubCalculationCsi0GivenK1}
        k_1=\delta_1+\sum_{k=2}^{n} \delta_k \cdot \nu_0^{k-1}\cdot \xi_0^{(1+k_1m)(k-1)}\cdot \left(\int\limits_{\alpha}^{\beta} s^{m-1-k_1m}ds \right)^{k-1}.
      \end{gather}
      We consider two cases to compute the integral:
     \begin{enumerate}[leftmargin=*, label=\textup{\alph*)}]
     \item if $k_1\not=1$ then Equation \eqref{EqSubCalculationCsi0GivenK1} becomes
     \begin{equation*}
        k_1=\delta_1+\sum_{k=2}^{n} \delta_k \cdot \nu_0^{k-1}\cdot \xi_0^{(1+k_1m)(k-1)}\cdot \left(\frac{\beta^{m(1-k_1)}-\alpha^{m(1-k_1)}}{m(1-k_1)} \right)^{k-1},
     \end{equation*}
     where $\alpha\le \xi_0 \le \beta$;

     \item if $k_1=1$ then Equation \eqref{EqSubCalculationCsi0GivenK1} becomes
      \begin{equation*}
        1-\delta_1= \sum_{k=2}^{n} \delta_k \cdot \nu_0^{k-1}\cdot \xi_0^{(1+m)(k-1)}\cdot \left( \ln\left|\frac{\beta}{\alpha}\right|\right)^{k-1},\ \alpha\not=0,\quad \alpha\le \xi_0 \le \beta;
      \end{equation*}
     \end{enumerate}

     \item if $a(t)=\nu_0+\nu_1 t$, where $\nu_0,\, \nu_1 \in \mathbb{R}$, $\nu_1\not=0$ 
        then from \eqref{EqConditioningCsi0K1AlphaBeta} we get
         \begin{equation*}
           k_1=\delta_1+\sum_{k=2}^{n} \delta_k \cdot (\nu_0+\nu_1 \xi_0)^{(1+k_1)(k-1)}\cdot \left(\int\limits_{\alpha}^{\beta} (\nu_0+\nu_1 s)^{-k_1}ds\right)^{k-1}.
         \end{equation*}
         We consider the following cases:
         \begin{enumerate}[leftmargin=*, label=\textup{\alph*)}]
           \item if $k_1\not=1$ we have
           \begin{equation*}\hspace{-0.2cm}
           \sum_{k=2}^{n} \frac{\delta_k\cdot (\nu_0+\nu_1 \xi_0)^{(1+k_1)(k-1)}}{(\nu_1(1-k_1))^{k-1}} \cdot[(\nu_0+\nu_1\beta)^{1-k_1}-(\nu_0+\nu_1 \alpha)^{1-k_1}]^{k-1}
           =k_1-\delta_1,
         \end{equation*}
           where $\alpha\le \xi_0 \le \beta$;

          \item if $k_1=1$ we have
           \begin{equation*}
           1=\delta_1+\sum_{k=2}^{n} \frac{\delta_k}{\nu_1^{k-1}} \cdot (\nu_0+\nu_1 \xi_0)^{2(k-1)}\cdot \left(\ln\left|\frac{\nu_0+\nu_1\beta}{\nu_0+\nu_1\alpha}\right|\right)^{k-1},
         \end{equation*}
         where $\alpha\le \xi_0 \le \beta$.
         \end{enumerate}
\end{enumerate}
}\end{remark}

\begin{remark}{\rm
From Corollary \ref{corCorrespondingthmIntDiffOp1.1} one can construct representations of the covariant type commutation relation $AB=BF(A)$ where the operators $A$ and $B$ are  given in \eqref{IntegralOperatorDegeneratedKernelDiffOp} and the operator $B$ is known. This is done by writing the unknown
kernel of operator $A$ as function of the corresponding coefficient of operator $B$. Moreover, it is also possible to find representations of this commutation relation when one of the functions $a$, $b$ or $c$, which are defined in \eqref{IntegralOperatorDegeneratedKernelDiffOp}, is given.
}\end{remark}



\begin{cor}\label{corCorrespondingthmIntDiffOpCaseOfMonomial}
Let $A:C^\infty[\alpha,\beta]\to C^\infty[\alpha,\beta]$ and  $B:C^\infty[\alpha,\beta]\to C^\infty[\alpha,\beta]$ be linear operators defined as follows
\begin{equation*}
  (Ax)(t)= \int\limits_{\alpha}^{\beta} a(t)c(s)x(s)ds,\quad (Bx)(t)= b(t)\frac{dx}{dt},
\end{equation*}
where $\alpha,\beta\in\mathbb{R}$ and $a, b, c:[\alpha,\beta]\to\mathbb{R}$ belong to $C^\infty[\alpha,\beta]$. Set
\begin{gather}\nonumber
  \Omega_a=\{t\in[\alpha,\beta]:\, a(t)\not=0,\ a'(t)\not=0\},\quad \Omega_c=\{s\in[\alpha,\beta]:\, c(s)\not=0\},  \\
  \nonumber
  Q_{[\alpha,\beta]}(a,c)= \int\limits_{\alpha}^{\beta} a(s)c(s)ds. 
\end{gather}
 Suppose that the function $a$ is given and $\Omega_a=\Omega_c$ with exception perhaps of countably many points in $[\alpha,\beta]$. Then
   for all $x\in D_{[\alpha,\beta]}$, where $D_{[\alpha,\beta]}$ is given in \eqref{SetDCinfinityXofalphabetazero}, that is, $D_{[\alpha,\beta]}=\{x\in C^\infty[\alpha,\beta]:\ x(\alpha)=x(\beta)=0\}$, for some positive integer $n$ and   some non-zero real constant $\delta$ the equality $ABx=\delta BA^nx$ holds true
if and only if one of the following is true
    \begin{enumerate}[leftmargin=*, label=\textup{\arabic*.}, ref=\arabic*]
        \item if $Q_{[\alpha,\beta]}(a,c)=0$ and $n\not=1$ then if ${\rm supp}\, a\not=\emptyset$ then $b(s)c(s)=\gamma_0$ for some real constant $\gamma_0$;
        \item if $Q_{[\alpha,\beta]}(a,c)\not=0$, $(t,s)\in \Omega_a\times \Omega_c$  then the following conditions are fulfilled:
     \begin{enumerate}[leftmargin=*, label=\textup{\alph*)}]
         \item
     if $(t,s)\in \Omega_a\times \Omega_c$ then
   $\resizebox{0.35\hsize}{!}{$ \displaystyle
     b(t)=\frac{a(t)\lambda}{\delta Q_{[\alpha,\beta]}(a,c)^{n-1} a'(t)}$}$, \\
    $ \resizebox{0.9\hsize}{!}{$ \displaystyle \hspace{-4mm} c(s)=c_{\xi_0}(s)={\rm exp}\left(\di\int\limits_{[\xi_0,s]\cap\,\Omega_a\,\cap\, \Omega_c} \hspace{-6mm}\frac{- \delta Q_{[\alpha,\beta]}(a,c)^{n-1} a'(\tau)^2+a(\tau)a''(\tau)-a'(\tau)^2}{a(\tau)a'(\tau)}d\tau\right), $}$

     for some  non-zero real constant $\lambda$ and $\xi_0\in \Omega_a\cap \Omega_c$;

       %


    \item if $(t,s)\in \Omega_a\times ([\alpha,\beta]\setminus \Omega_c)$ then  the set
    \begin{equation*}
       {\rm supp }\, c'\ \cap \, {\rm supp}\, b\, \cap\, [\alpha,\beta]\setminus \Omega_c  =\emptyset;
    \end{equation*}

    \item if $(t,s)\in ([\alpha,\beta]\setminus\Omega_a)\times \Omega_c$ then either the set
    \begin{equation*}
      {\rm supp }\, a' \, \cap \, {\rm supp}\, b\, \cap\,[\alpha,\beta]\setminus\Omega_a=\emptyset
    \end{equation*}
    or ${\rm supp }\,a'\, \setminus\, {\rm supp}\, a\not=\emptyset$ implies $b(s)c(s)=\gamma_0$ for some real constant $\gamma_0$;

    \item if $(t,s)\in ([\alpha,\beta]\setminus\Omega_a)\times ([\alpha,\beta]\setminus\Omega_c)$ then
          ${\rm supp }\,a'\, \setminus\, {\rm supp}\, a\not=\emptyset$ implies that the set
          \begin{equation*}
             {\rm supp }\, c' \, \cap \, {\rm supp}\, b\, \cap\,[\alpha,\beta]\setminus\Omega_c=\emptyset.
          \end{equation*}
   \end{enumerate}
  \end{enumerate}
\end{cor}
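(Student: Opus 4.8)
The plan is to treat this as the monomial specialization of Corollary~\ref{corCorrespondingthmIntDiffOp1.1}: take the polynomial there to be $F(t)=\delta t^n$, so that $\delta_n=\delta$ and all lower coefficients vanish. First I would record the iterated kernels of the degenerate (separable) kernel $k_0(t,s)=a(t)c(s)$. An easy induction, already carried out in the proof of Corollary~\ref{corCorrespondingthmIntDiffOp1.1}, gives
\begin{equation*}
  k_m(t,s)=a(t)c(s)\,Q_{[\alpha,\beta]}(a,c)^{m},\qquad m=0,1,\ldots,n,
\end{equation*}
so that, for the monomial $F$, the accumulated kernel reduces to the single term $F_n(k(t,s))=\delta\,a(t)c(s)\,Q_{[\alpha,\beta]}(a,c)^{n-1}$. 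In particular the constant $k_1$ of Corollary~\ref{corCorrespondingthmIntDiffOp1.1}, defined in \eqref{constK1ProofCorAdiffBIntOp}, here collapses to $\kappa:=\delta\,Q_{[\alpha,\beta]}(a,c)^{n-1}$.

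Since $F$ has no constant term, I would then invoke the simplified criterion \eqref{SystemEqIntDiffOp1} of Theorem~\ref{thmIntDiffOp}. Substituting $k(t,s)=a(t)c(s)$ and the expression for $F_n$ above, the equality $ABx=\delta BA^nx$ for all $x\in D_{[\alpha,\beta]}$ becomes equivalent to the single functional--differential equation
\begin{equation*}
  -a(t)\,\bigl(b(s)c(s)\bigr)' = \kappa\,c(s)\,b(t)\,a'(t),\qquad (t,s)\in[\alpha,\beta]^2,
\end{equation*}
which is exactly equation \eqref{EqualityEquivalentABBFAProofCorAdiffBIntOp} of the earlier corollary with $k_1$ replaced by $\kappa$. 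Observing that $\kappa=0$ precisely when $Q_{[\alpha,\beta]}(a,c)=0$ and $n\neq1$ (for $n=1$ one has $\kappa=\delta\neq0$), the whole analysis of Corollary~\ref{corCorrespondingthmIntDiffOp1.1} transfers.

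The remaining work is the case split. When $\kappa=0$ the equation reduces to $a(t)(b(s)c(s))'=0$, yielding item~1. When $\kappa\neq0$, on the product region $\Omega_a\times\Omega_c$ I would divide through by $a(t)c(s)$ and separate the variables: the $t$--dependent side $\kappa b(t)a'(t)/a(t)$ and the $s$--dependent side $-(b(s)c(s))'/c(s)$ must each equal a common constant $\lambda$. Solving the $t$--equation gives $b(t)=\lambda a(t)/(\kappa a'(t))$, and substituting back produces the separable first--order equation $c'/c=-(\kappa+1)a'/a+a''/a'$, whose integration yields the stated exponential formula for $c_{\xi_0}$. The three mixed regions ($\Omega_a\times([\alpha,\beta]\setminus\Omega_c)$, and so on) each reduce the master equation to a vanishing product of two continuous functions; here I would apply Lemma~\ref{LemmaContinuousFunctionsSupportMeasureZeroIntOpDiffOP} to convert the almost--everywhere vanishing into the disjointness of supports recorded in items~2b)--2d).

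I expect the only genuinely delicate point to be the bookkeeping of the four regions together with the role of $\mathrm{supp}\,a'\setminus\mathrm{supp}\,a$: at a point where $a$ vanishes but $a'$ does not, the two sides of the master equation degenerate differently, and one must keep track of whether the surviving constraint is $b(s)c(s)=\gamma_0$ or a support--disjointness condition. Because every step mirrors Corollary~\ref{corCorrespondingthmIntDiffOp1.1} with $k_1\mapsto\kappa$, no new estimate is needed; the argument is essentially a translation, and the main care lies in verifying that the exponent $-\kappa a'^2+aa''-a'^2$ in the formula for $c$ is exactly $a a'\cdot(c'/c)$, which confirms that the substitution is consistent.
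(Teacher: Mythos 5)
Your proposal is correct and follows essentially the same route as the paper: the paper's own proof simply invokes Corollary~\ref{corCorrespondingthmIntDiffOp1.1} with the monomial $F(t)=\delta t^n$, noting that $k_1$ becomes $\delta$ when $n=1$ and $\delta\, Q_{[\alpha,\beta]}(a,c)^{n-1}$ when $n\neq 1$, so that $k_1=0$ iff $Q_{[\alpha,\beta]}(a,c)=0$ and $n\neq 1$ — exactly your observation about $\kappa$. The only difference is that you re-derive the master equation, the separation of variables, and the support bookkeeping, which the paper leaves implicit by citing the earlier corollary; this extra detail is consistent with the cited proof and introduces no gap.
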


\begin{proof}
  This follows by Corollary \ref{corCorrespondingthmIntDiffOp1.1} when
  $$
  k_1=\left\{\begin{array}{rr}
  \delta, & \mbox{ if }\ n=1 \\
  \delta Q_{[\alpha,\beta]}(a,c)^{n-1}, & \mbox{ if } n\not=1.
  \end{array}\right.
  $$
  Since $\delta$ is a non-zero constant and $n\in\mathbb{N}$, then $k_1=0$ if and only if $Q_{[\alpha,\beta]}(a,c)=0$ and $n\not=1$.
\qed\end{proof}

\begin{example}{\rm
Consider linear operators $A:C^\infty[\alpha,\beta]\to C^\infty[\alpha,\beta]$,  $B:C^\infty[\alpha,\beta]\to C^\infty[\alpha,\beta]$ be defined as follows
\begin{equation*}
  (Ax)(t)= \int\limits_{\alpha}^{\beta} a(t)c(s)x(s)ds,\quad (Bx)(t)= b(t)\frac{dx}{dt},
\end{equation*}
where $\alpha,\beta$ are real numbers  and $a, b, c:[\alpha,\beta]\to\mathbb{R}$ belong to $C^{\infty}[\alpha,\beta]$ such that
\begin{equation*}
  \int\limits_{\alpha}^{\beta} a(s)c(s)ds=0,\quad {\rm supp}\, b \, \cap \, {\rm supp}\, c=\emptyset.
\end{equation*}
Let $F:\mathbb{R}\to \mathbb{R}$ be defined by $F(t)=\delta_1t+\delta_2t^2+\ldots+\delta_n t^n$, where
$\delta_1,\ldots,\delta_n $ are constants.
 If $\delta_1=0$ then operators satisfy the commutation relation $  ABx=BF(A)x$ for each $x\in D_{[\alpha,\beta]}$, where $D_{[\alpha,\beta]}$ is given in \eqref{SetDCinfinityXofalphabetazero}. In fact, it follows from Corollary \ref{corCorrespondingthmIntDiffOp1.1}  in the particular case when $k_1=0$ and $b(\cdot)c(\cdot)=0$.
}\end{example}

\begin{remark}{\rm
In Corollary \ref{corCorrespondingthmIntDiffOpCaseOfMonomial} implicitly there are conditions for $\xi_0$, $Q_{[\alpha,\beta]}(a,c)$, $\alpha$ and $\beta$ so that we do have representations for the commutation relation $ABx=\delta BA^nx$, for each $x\in D_{[\alpha,\beta]}$ and some $\delta\in\mathbb{R}\setminus\{0\}$. Suppose that  functions $a(\cdot)$ and $c(\cdot)$ from operator $A$ in Corollary \ref{corCorrespondingthmIntDiffOpCaseOfMonomial} are such that $\Omega_a=\Omega_c=[\alpha,\beta]$, where $\Omega_a$ and $\Omega_c$ are defined in \eqref{SetOmegaaOmegacProofCorAdiffBIntOp}. Recall that $Q_{[\alpha,\beta]}(a,c)$ is given by
\begin{equation}\label{DefofMiuRemCorThmIntDiffOpMonomial}
  Q_{[\alpha,\beta]}(a,c)=\int\limits_{\alpha}^{\beta} a(s)c(s)ds.
\end{equation}
If $Q_{[\alpha,\beta]}(a,c)\not=0$ then
\begin{equation*}
c(s)={\rm exp}\left(\di\int\limits_{\xi_0}^s \frac{-\delta Q_{[\alpha,\beta]}(a,c)^{n-1} a'(\tau)^2+a(\tau)a''(\tau)-a'(\tau)^2}{a(\tau)a'(\tau)}d\tau\right).
\end{equation*}
By replacing this in \eqref{DefofMiuRemCorThmIntDiffOpMonomial} we get
\begin{equation}\label{EqConditioningCsi0AlphaBetaMuMonomial}
 \resizebox{0.95\hsize}{!}{$\displaystyle Q_{[\alpha,\beta]}(a,c)=\int\limits_{\alpha}^{\beta} a(s) {\rm exp}\big(\di\int\limits_{\xi_0}^s \frac{-\delta Q_{[\alpha,\beta]}(a,c)^{n-1}a'(\tau)^2+a(\tau)a''(\tau)-a'(\tau)^2}{a(\tau)a'(\tau)}d\tau\big) ds, $}
\end{equation}
which gives conditions to find $\xi_0$ given $Q_{[\alpha,\beta]}(a,c)$, $a(\cdot)$, $\alpha$, $\beta$ and we add the condition  $\alpha\le \xi_0\le \beta$. We consider the following particular cases:
\begin{enumerate}[leftmargin=*, label=\textup{\arabic*.}, ref=\arabic*]
  \item if $a(t)=\nu_0 t^m$, where $\nu_0 \in \mathbb{R}\setminus\{0\}$ and $m\ge 2$ is a positive integer, 
      from \eqref{EqConditioningCsi0AlphaBetaMuMonomial} we get
      \begin{equation}\label{EqSubCalculationCsi0GivenMiu}
        Q_{[\alpha,\beta]}(a,c)=\nu_0\cdot \xi_0^{(1+\delta Q_{[\alpha,\beta]}(a,c)^{n-1} m)}\cdot \int\limits_{\alpha}^{\beta} s^{m-1-\delta Q_{[\alpha,\beta]}(a,c)^{n-1} m}ds .
      \end{equation}
      We consider the following cases:
      \begin{enumerate}[leftmargin=*, label=\textup{\alph*)}]
     \item if $\delta Q_{[\alpha,\beta]}(a,c)^{n-1} \not=1$ then Equation \eqref{EqSubCalculationCsi0GivenMiu} becomes
     \begin{eqnarray}\nonumber
   &&     Q_{[\alpha,\beta]}(a,c) m(1-\delta Q_{[\alpha,\beta]}(a,c)^{n-1})= \nu_0\cdot \xi_0^{(1+\delta Q_{[\alpha,\beta]}(a,c)^{n-1}m)}
   \\
   && \label{EqSubCalculationCsi0GivenMiuIntegralNotLog}
       \cdot  \left[{\beta^{m(1-\delta Q_{[\alpha,\beta]}(a,c)^{n-1})}-\alpha^{m(1-\delta Q_{[\alpha,\beta]}(a,c)^{n-1})}}\right],
     \end{eqnarray}
     where $\alpha\le \xi_0 \le \beta$. We consider the following cases:

         \noindent if $1+\delta m Q_{[\alpha,\beta]}(a,c)^{n-1}\not=0$, $\xi_0>0$,
          \begin{equation*}
         \frac{Q_{[\alpha,\beta]}(a,c)}{\nu_0}\cdot \frac{Q_{[\alpha,\beta]}(a,c) m(1-\delta Q_{[\alpha,\beta]}(a,c)^{n-1})}{{\beta^{m(1-\delta Q_{[\alpha,\beta]}(a,c)^{n-1})}-\alpha^{m(1-\delta Q_{[\alpha,\beta]}(a,c)^{n-1})}}}>0
         \end{equation*}
        then \eqref{EqSubCalculationCsi0GivenMiuIntegralNotLog} has a real solution $\xi_0$ given by
         \begin{eqnarray*}
         &&  \xi_0={\rm exp}\left(\frac{1}{1+\delta m Q_{[\alpha,\beta]}(a,c)^{n-1}}\cdot \ln\left[\frac{Q_{[\alpha,\beta]}(a,c)}{\nu_0}\right.\right.\cdot  \\
         &&  \cdot \left.\left.\frac{m(1-\delta Q_{[\alpha,\beta]}(a,c)^{n-1})}{\beta^{m(1-\delta Q_{[\alpha,\beta]}(a,c)^{n-1})}-\alpha^{m(1-\delta Q_{[\alpha,\beta]}(a,c)^{n-1})}}\right] \right);
         \end{eqnarray*}

         \noindent if $1+\delta m Q_{[\alpha,\beta]}(a,c)^{n-1}\not=0$, $\xi_0>0$,
          \begin{equation*}
         \frac{Q_{[\alpha,\beta]}(a,c)}{\nu_0}\cdot \frac{Q_{[\alpha,\beta]}(a,c) m(1-\delta Q_{[\alpha,\beta]}(a,c)^{n-1})}{{\beta^{m(1-\delta Q_{[\alpha,\beta]}(a,c)^{n-1})}-\alpha^{m(1-\delta Q_{[\alpha,\beta]}(a,c)^{n-1})}}}<0
         \end{equation*}
        then \eqref{EqSubCalculationCsi0GivenMiuIntegralNotLog} has no real solution.

         \noindent if $1+\delta m Q_{[\alpha,\beta]}(a,c)^{n-1}=0$, $\xi_0\ge 0$ then  $\xi_0^{1+\delta m Q_{[\alpha,\beta]}(a,c)^{n-1} }=1$, and so,  Equation \eqref{EqSubCalculationCsi0GivenMiuIntegralNotLog} has a real solution when
             \begin{equation*}
               Q_{[\alpha,\beta]}(a,c)=\nu_0\cdot \frac{\beta^{m+1}-\alpha^{m+1}}{m+1};
             \end{equation*}

          \noindent if $1+\delta m Q_{[\alpha,\beta]}(a,c)^{n-1}>0$, $\xi_0= 0$ then  $\xi_0^{1+\delta m Q_{[\alpha,\beta]}(a,c)^{n-1} }=1$, and so,  \eqref{EqSubCalculationCsi0GivenMiuIntegralNotLog} has no real solution since it implies $Q_{[\alpha,\beta]}(a,c)=0$, but $Q_{[\alpha,\beta]}(a,c)\not=0$ by assumption
              of the Corollary \ref{corCorrespondingthmIntDiffOpCaseOfMonomial}.

          \noindent if $\xi_0< 0$, $1+\delta m Q_{[\alpha,\beta]}(a,c)^{n-1}=\frac{l_1}{l_2}$, where $l_1,l_2\in \mathbb{Z}$, $l_2\not=0$, $l_1/l_2$ is an irreducible fraction, $\di \frac{Q_{[\alpha,\beta]}(a,c)}{\nu_0}\cdot  \frac{m(1-\delta Q_{[\alpha,\beta]}(a,c)^{n-1})}{\beta^{m(1-\delta Q_{[\alpha,\beta]}(a,c)^{n-1})}-\alpha^{m(1-\delta Q_{[\alpha,\beta]}(a,c)^{n-1})}}<0$ then
                \begin{itemize}[leftmargin=*]
                  \item if either $l_1$ or $l_2$ is even then there is no real solution for Equation \eqref{EqSubCalculationCsi0GivenMiuIntegralNotLog};
                  \item if $l_1, l_2$ are odd then Equation \eqref{EqSubCalculationCsi0GivenMiuIntegralNotLog} has a real solution $\xi_0$ given by
                    \begin{equation*}
                      \xi_0=\sqrt[l_1]{\left(\frac{Q_{[\alpha,\beta]}(a,c)}{\nu_0}\cdot \frac{m(1-\delta Q_{[\alpha,\beta]}(a,c)^{n-1})}{\beta^{m(1-\delta Q_{[\alpha,\beta]}(a,c)^{n-1})}-\alpha^{m(1-\delta Q_{[\alpha,\beta]}(a,c)^{n-1})}} \right)^{l_2}};
                    \end{equation*}
                 \end{itemize}
         \noindent if $\xi_0< 0$, $1+\delta m Q_{[\alpha,\beta]}(a,c)^{n-1}=\frac{l_1}{l_2}$, where $l_1,l_2\in \mathbb{Z}$, $l_2\not=0$, $l_1/l_2$ is an irreducible fraction, $\di \frac{Q_{[\alpha,\beta]}(a,c)}{\nu_0}\cdot \frac{m(1-\delta Q_{[\alpha,\beta]}(a,c)^{n-1})}{\beta^{m(1-\delta Q_{[\alpha,\beta]}(a,c)^{n-1})}-\alpha^{m(1-\delta Q_{[\alpha,\beta]}(a,c)^{n-1})}}>0$ then
             \begin{itemize}[leftmargin=*]
                       \item if either $l_1$, $l_2$ are odd or $l_2$ is even then there is no real solution for Equation \eqref{EqSubCalculationCsi0GivenMiuIntegralNotLog};
                  \item if $l_1$ is even and $l_2$ is odd then Equation \eqref{EqSubCalculationCsi0GivenMiuIntegralNotLog} has a real solution $\xi_0$ given by
                    \begin{eqnarray*}
                  &&    \xi_0=-\sqrt[l_1]{\left(\frac{Q_{[\alpha,\beta]}(a,c)}{\nu_0}\cdot m(1-\delta Q_{[\alpha,\beta]}(a,c)^{n-1})\right)^{l_2}}  \\
                  && \cdot
                     \sqrt[l_1]{ \left(\frac{1}{\beta^{m(1-\delta Q_{[\alpha,\beta]}(a,c)^{n-1})}-\alpha^{m(1-\delta Q_{[\alpha,\beta]}(a,c)^{n-1})}} \right)^{l_2}};
                    \end{eqnarray*}
                \end{itemize}

               \noindent if $\xi_0< 0$ and $1+\delta m Q_{[\alpha,\beta]}(a,c)^{n-1}$ is an irrational number then \\ $\xi_0^{1+\delta m Q_{[\alpha,\beta]}(a,c)^{n-1}}$ is not defined, thus Equation \eqref{EqSubCalculationCsi0GivenMiuIntegralNotLog} has no real solution.

     \item if $\delta Q_{[\alpha,\beta]}(a,c)^{n-1}=1$ then Equation \eqref{EqSubCalculationCsi0GivenMiu} becomes
      \begin{equation}\label{EqSubCalculationCsi0GivenMiuIntegralLog}
        Q_{[\alpha,\beta]}(a,c)= \nu_0 \xi_0^{1+m}\cdot \ln\left|\frac{\beta}{\alpha}\right|,\ \alpha\not=0, \, \beta\not=0,\,\ln\left|\frac{\beta}{\alpha}\right|\not=0  \quad \alpha\le \xi_0 \le \beta.
      \end{equation}
      We consider the following cases:

          \noindent if $m$ is even then  Equation \eqref{EqSubCalculationCsi0GivenMiuIntegralLog} has a real solution $\xi_0$ given by
          \begin{equation*}
            \xi_0=\sqrt[1+m]{\frac{Q_{[\alpha,\beta]}(a,c)}{\nu_0 \cdot \ln\left|\frac{\beta}{\alpha}\right|}},\ \alpha\not=0,\ \beta\not=0,\,\ln\left|\frac{\beta}{\alpha}\right|\not=0.
          \end{equation*}

          \noindent if $m$ is odd then  Equation \eqref{EqSubCalculationCsi0GivenMiuIntegralLog} has a real solution if and only if
           $\frac{Q_{[\alpha,\beta]}(a,c)}{\nu_0\cdot \ln\left|\frac{\beta}{\alpha}\right|}>0$, $\alpha\not=0$ which is
           given by
           \begin{equation*}
            \xi_0=\pm \sqrt[1+m]{\frac{Q_{[\alpha,\beta]}(a,c)}{\nu_0 \cdot \ln\left|\frac{\beta}{\alpha}\right|}},\ \alpha\not=0,\ \beta\not=0,\,\ln\left|\frac{\beta}{\alpha}\right|\not=0.
          \end{equation*}
        \end{enumerate}

     \item if $a(t)=\nu_0+\nu_1 t$, where $\nu_0,\, \nu_1 \in \mathbb{R}$, $\nu_1\not=0$ 
        then from \eqref{EqConditioningCsi0AlphaBetaMuMonomial} we get
         \begin{equation*}
          Q_{[\alpha,\beta]}(a,c)=(\nu_0+\nu_1 \xi_0)^{1+\delta Q_{[\alpha,\beta]}(a,c)^{n-1}}\cdot \int\limits_{\alpha}^{\beta} (\nu_0+\nu_1 s)^{-\delta Q_{[\alpha,\beta]}(a,c)^{n-1}}ds.
         \end{equation*}
         We consider the following cases:
         \begin{enumerate}[leftmargin=*, label=\textup{\alph*)}]
           \item if $\delta Q_{[\alpha,\beta]}(a,c)^{n-1}\not=1$ we have
           \begin{eqnarray}\nonumber
        &&   (\nu_0+\nu_1 \xi_0)^{1+\delta Q_{[\alpha,\beta]}(a,c)^{n-1}}\\ \label{EqRemarkXiZeroAffineFunctionIntegralNotLog}
         &&   =\frac{1}{\nu_1}\frac{Q_{[\alpha,\beta]}(a,c) (1-\delta Q_{[\alpha,\beta]}(a,c)^{n-1})}{ (\nu_0+\nu_1 \beta)^{1-\delta Q_{[\alpha,\beta]}(a,c)^{n-1}} - (\nu_0+\nu_1 \alpha)^{1-\delta Q_{[\alpha,\beta]}(a,c)^{n-1}}},
         \end{eqnarray}
           where $\alpha\le \xi_0 \le \beta$. We consider the following cases:

             \noindent if $\nu_0+\nu_1 \xi_0>0$, $1+\delta Q_{[\alpha,\beta]}(a,c)^{n-1}\not=0$ and
          \begin{equation*}
             \frac{1}{\nu_1}\cdot \frac{Q_{[\alpha,\beta]}(a,c)(1-\delta Q_{[\alpha,\beta]}(a,c)^{n-1})}{(\nu_0+\nu_1\beta)^{1-\delta Q_{[\alpha,\beta]}(a,c)^{n-1}}-(\nu_0+\nu_1\alpha)^{1-\delta Q_{[\alpha,\beta]}(a,c)^{n-1}}}>0
           \end{equation*}
              then
                  Equation \eqref{EqRemarkXiZeroAffineFunctionIntegralNotLog} has solution $\xi_0$ given by
                  \begin{eqnarray*}
                    \xi_0 &=&-\frac{\nu_0}{\nu_1} +\frac{1}{\nu_1}{\rm exp}\left(\frac{1}{1+\delta Q_{[\alpha,\beta]}(a,c)^{n-1}}\cdot \right. \\
                  & & \left.  \cdot\ln\frac{1}{\nu_1}\cdot\frac{Q_{[\alpha,\beta]}(a,c)(1-\delta Q_{[\alpha,\beta]}(a,c)^{n-1})}{(\nu_0+\nu_1\beta)^{1-\delta Q_{[\alpha,\beta]}(a,c)^{n-1}}-(\nu_0+\nu_1 \alpha)^{1-\delta Q_{[\alpha,\beta]}(a,c)^{n-1}}} \right);
                  \end{eqnarray*}

              \noindent if $\nu_0+\nu_1 \xi_0>0$, $1+\delta Q_{[\alpha,\beta]}(a,c)^{n-1}\not=0$ and
                \begin{equation*}
              \frac{1}{\nu_1}\cdot \frac{Q_{[\alpha,\beta]}(a,c)(1-\delta Q_{[\alpha,\beta]}(a,c)^{n-1})}{(\nu_0+\nu_1\beta)^{1-\delta Q_{[\alpha,\beta]}(a,c)^{n-1}}-(\nu_0+\nu_1\alpha)^{1-\delta Q_{[\alpha,\beta]}(a,c)^{n-1}}}<0
                \end{equation*}
               then
                  Equation \eqref{EqRemarkXiZeroAffineFunctionIntegralNotLog} has no real solution;

              \noindent if $\nu_0+\nu_1 \xi_0 \ge 0$, $1+\delta Q_{[\alpha,\beta]}(a,c)^{n-1}=0$ then $(\nu_0+\nu_1 \xi_0)^{1+\delta Q_{[\alpha,\beta]}(a,c)^{n-1}}=1$ hence
                  Equation \eqref{EqRemarkXiZeroAffineFunctionIntegralNotLog} has solution a real $\xi_0=-\frac{\nu_0}{\nu_1}$ when
                  \begin{equation*}
                    Q_{[\alpha,\beta]}(a,c)=\frac{(\nu_0+\nu_1\beta)^2-(\nu_0+\nu_1\alpha)^2}{2};
                  \end{equation*}

               \noindent if $\nu_0+\nu_1 \xi_0=0$, $1+\delta Q_{[\alpha,\beta]}(a,c)^{n-1}>0$ and
               \begin{equation*}
               \frac{1}{\nu_1}\cdot \frac{Q_{[\alpha,\beta]}(a,c)(1-\delta Q_{[\alpha,\beta]}(a,c)^{n-1})}{(\nu_0+\nu_1\beta)^{1-\delta Q_{[\alpha,\beta]}(a,c)^{n-1}}-(\nu_0+\nu_1\alpha)^{1-\delta Q_{[\alpha,\beta]}(a,c)^{n-1}}}>0
              \end{equation*}
                then $(\nu_0+\nu_1 \xi_0)^{1+\delta Q_{[\alpha,\beta]}(a,c)^{n-1}}=0$ and so
                  Equation \eqref{EqRemarkXiZeroAffineFunctionIntegralNotLog} has no real solution since this implies $Q_{[\alpha,\beta]}(a,c)=0$ which contradicts the hypothesis of Corollary \ref{corCorrespondingthmIntDiffOpCaseOfMonomial};

                \noindent if $\nu_0+\nu_1\xi_0< 0$, $1+\delta  Q_{[\alpha,\beta]}(a,c)^{n-1}=\frac{l_1}{l_2}$, where $l_1,l_2\in \mathbb{Z}$, $l_2\not=0$, $l_1/l_2$ is an irreducible fraction,
                \begin{equation*}
                \di \frac{Q_{[\alpha,\beta]}(a,c)}{\nu_0}\cdot \frac{1-\delta Q_{[\alpha,\beta]}(a,c)^{n-1}}{(\nu_0+\nu_1\beta)^{1-\delta Q_{[\alpha,\beta]}(a,c)^{n-1}}-(\nu_0+\nu_1\alpha)^{1-\delta Q_{[\alpha,\beta]}(a,c)^{n-1}}}>0
                \end{equation*}
                then
                  \begin{itemize}[leftmargin=*]
                       \item if either $l_1$, $l_2$ are odd or $l_2$ is even then there is no real solution for Equation \eqref{EqRemarkXiZeroAffineFunctionIntegralNotLog};
                  \item if $l_1$ is even and $l_2$ is odd then Equation \eqref{EqRemarkXiZeroAffineFunctionIntegralNotLog} has a real solution $\xi_0$ given by
                    \begin{eqnarray*}
                      \xi_0 &=&-\frac{\nu_0}{\nu_1}-
                      \frac{1}{\nu_1}\sqrt[l_1]{\left(\frac{Q_{[\alpha,\beta]}(a,c)}{\nu_0}\right)^{l_2}} \\ & &\cdot \sqrt[l_1]{\left(\frac{1-\delta Q_{[\alpha,\beta]}(a,c)^{n-1}}{(\nu_0+\nu_1\beta)^{1-\delta Q_{[\alpha,\beta]}(a,c)^{n-1}}-(\nu_0+\nu_1\alpha)^{1-\delta Q_{[\alpha,\beta]}(a,c)^{n-1}}} \right)^{l_2}};
                    \end{eqnarray*}
                \end{itemize}

               \noindent if $\nu_0+\nu_1\xi_0< 0$ and $1+\delta  Q_{[\alpha,\beta]}(a,c)^{n-1}$ is an irrational number then the expression $(\nu_0+\nu_1\xi_0)^{1+\delta  Q_{[\alpha,\beta]}(a,c)^{n-1}}$ is not defined, thus Equation \eqref{EqRemarkXiZeroAffineFunctionIntegralNotLog} has no real solution.

          \item if $\delta Q_{[\alpha,\beta]}(a,c)^{n-1}=1$ we have
           \begin{equation}\label{EqRemarkXiZeroCondAffineFunctionIntegralLog}
           (\nu_0+\nu_1\xi_0)^2=\frac{Q_{[\alpha,\beta]}(a,c)\cdot \nu_1}{\ln\left|\frac{\nu_0+\nu_1 \beta}{\nu_0+\nu_1 \alpha}\right|},
         \end{equation}
         where $\alpha\le \xi_0 \le \beta$. We consider the following cases:

                \noindent if $\frac{Q_{[\alpha,\beta]}(a,c)\cdot \nu_1}{\ln\left|\frac{\nu_0+\nu_1 \beta}{\nu_0+\nu_1 \alpha}\right|}\ge 0 $ then
                Equation \eqref{EqRemarkXiZeroCondAffineFunctionIntegralLog} has real solutions $\xi_0$ given by
                \begin{equation*}
                  \xi_0=-\frac{\nu_0}{\nu_1}\pm \frac{1}{\nu_1}\sqrt{\frac{Q_{[\alpha,\beta]}(a,c)\cdot \nu_1}{\ln\left|\frac{\nu_0+\nu_1 \beta}{\nu_0+\nu_1 \alpha}\right|}};
                \end{equation*}

                \noindent if $\frac{Q_{[\alpha,\beta]}(a,c)\cdot \nu_1}{\ln\left|\frac{\nu_0+\nu_1 \beta}{\nu_0+\nu_1 \alpha}\right|}< 0 $ then
                Equation \eqref{EqRemarkXiZeroCondAffineFunctionIntegralLog} has no real solution.

         \end{enumerate}
\end{enumerate}
}\end{remark}


\begin{ex}{\rm
Consider the set
  \begin{gather}\label{SubsetOfCinfinityZeroIntheboundary01}
  D_{[0,1]}=\left\{ x\in C^{\infty}[0,1]:\ x(0)=x(1)=0\right\}.
  \end{gather}
We seek for a particular solution from the following relations
  \begin{eqnarray*}
     b(t)&=&\frac{a(t)\lambda}{\delta Q_{[\alpha,\beta]}(a,c)^{n-1} a'(t)},\\
     c(s)&=&{\rm exp}\left(\di\int\limits_{[\xi_0,s]\cap\,\Omega_a\,\cap\, \Omega_c} \frac{- \delta Q_{[\alpha,\beta]}(a,c)^{n-1} a'(\tau)^2+a(\tau)a''(\tau)-a'(\tau)^2}{a(\tau)a'(\tau)}d\tau\right),
   \end{eqnarray*}
   for some  $\lambda\in\mathbb{R}\setminus\{0\}$, $\xi_0\in \Omega_a\cap \Omega_c$, $\Omega_a$ and $\Omega_c$ are defined in
   \eqref{SetOmegaaOmegacProofCorAdiffBIntOp}, that is,
  $$
  \Omega_a=\{t\in[\alpha,\beta]:\, a(t)\not=0,\ a'(t)\not=0\}, \quad   \Omega_c=\{s\in[\alpha,\beta]:\, c(s)\not=0\}.
   $$
  We regard $b(\cdot)=-a(\cdot)$ and look for $c$ such that $Q_{[\alpha,\beta]}(a,c)=\int\limits_{\alpha}^{\beta} a(s)c(s)ds=1$, $\alpha=0$, $\beta=1$ and $\delta=1$. Then, we get the following:
  \begin{gather*}
    a(t)=-b(t)=(t+1)\ln 2 ,\quad c(t)=\frac{1}{(\ln 2)^2(t+1)^2},\quad t\in\, [0,1],\quad \xi_0=\frac{1-\ln 2}{\ln 2}.
  \end{gather*}
  Thus, by applying Corollary \ref{corCorrespondingthmIntDiffOpCaseOfMonomial}, we have that operators $A:\, C^\infty[0,1]\to C^\infty[0,1]$ and $B:\, C^\infty[0,1]\to C^\infty[0,1]$ defined as follows
  \begin{gather*}
    (Ax)(t)= \int\limits_{0}^{1}  \frac{1}{\ln 2}\cdot\frac{t+1}{ (s+1)^2 }x(s)ds, \quad (Bx)(t)=-\ln 2 (t+1)\frac{dx}{dt}
  \end{gather*}
  satisfy the relation $ABx=BA^nx$, for all positive integers $n$ and for each $x\in D_{[0,1]}$. In fact, by integrating by parts and using the fact that for each $x\in D_{[0,1]}$ we have $x(0)=x(1)=0$  and
  \begin{eqnarray*}
    (ABx)(t)&=&\int\limits_0^1 \frac{1}{\ln 2}\cdot\frac{t+1}{ (s+1)^2 }\left(-\ln 2 (s+1)\right)x'(s)ds=-\int\limits_{0}^{1} \frac{t+1}{ (s+1)^2 }x(s)ds \\
            &=& (-\ln 2) \,(Ax)(t),\\
            (A^2x)(t)&=&\int\limits_0^1 \frac{1}{\ln 2}\cdot\frac{t+1}{ (s+1)^2 } \left(\int\limits_0^1 \frac{1}{\ln 2}\cdot\frac{s+1}{ (\tau+1)^2 }x(\tau) d\tau\right)ds\\
                  &=& \int\limits_0^1 \frac{1}{\ln 2}\cdot \frac{t+1}{ (\tau+1)^2 }x(\tau)d\tau =(Ax)(t), \\
    (BAx)(t)&=&  -\ln 2\, (t+1) \int\limits_{0}^{1} \frac{\left(t+1\right)'_t}{\ln 2(s+1)^2}x(s)ds=-\int\limits_{0}^{1}\frac{t+1}{ (s+1)^2 }x(s)ds\\ &=&(-\ln 2)\, (Ax)(t).
  \end{eqnarray*}
  Therefore for each $x\in D_{[0,1]}$ we have $ABx=BA^nx$ for all positive integers $n$.
  }\end{ex}

  \begin{ex}\label{ExampleABBA2IntDiffOpABNotCommute} {\rm
   We seek for a particular solution from the following relations
   \begin{eqnarray*}
     b(t)&=&\frac{a(t)\lambda}{\delta Q_{[\alpha,\beta]}(a,c)^{n-1} a'(t)},\\
     c(s)&=&{\rm exp}\left(\di\int\limits_{[\xi_0,s]\cap\,\Omega_a\,\cap\, \Omega_c} \frac{- \delta \mu^{n-1} a'(\tau)^2+a(\tau)a''(\tau)-a'(\tau)^2}{a(\tau)a'(\tau)}d\tau\right),
   \end{eqnarray*}
   for some  $\lambda\in\mathbb{R}\setminus\{0\}$, $\xi_0\in \Omega_a\cap \Omega_c$, $\Omega_a$ and $\Omega_c$ are defined in \eqref{SetOmegaaOmegacProofCorAdiffBIntOp}.
   We regard $b(t)=\lambda a(t)$, for some non-zero real constant $\lambda$, and look for $c$ such that $Q_{[\alpha,\beta]}(a,c)=\int\limits_{\alpha}^{\beta} a(s)c(s)ds=2$, $\alpha=0$, $\beta=1$ and $\delta=1$. Therefore, we get the following:
  \begin{gather*}
    b(t)=\lambda a(t)=\lambda\left(\frac{t}{2}+\gamma_0\right),\, \quad c(t)=\frac{(\xi_0+2\gamma_0)^3}{(t+2\gamma_0)^2},\quad t\in [0,1],
  \end{gather*}
  where $\lambda\in\mathbb{R}\setminus\{0\}$  and some $\gamma_0\in\mathbb{R}$.
  Thus, by applying Corollary \ref{corCorrespondingthmIntDiffOpCaseOfMonomial}, we have that operators $A:\,C^\infty[0,1]\to C^\infty[0,1]$, $B:\,C^\infty[0,1]\to C^\infty[0,1]$ defined as follows
  \begin{gather*}
    (Ax)(t)=  \int\limits_{0}^{1} \left(\frac{t}{2}+\gamma_0\right)\frac{(\xi_0+2\gamma_0)^3}{ (s+2\gamma_0)^3 }x(s)ds, \quad (Bx)(t)=\lambda \left(\frac{t}{2}+\gamma_0\right)\frac{dx}{dt},
  \end{gather*}
 where $\gamma_0$, $\xi_0$ satisfy $(2\gamma_0+\xi_0)^3=8\gamma_0(1+2\gamma_0)$, $\lambda$ is a real constant, satisfy the relation $ABx=BA^2x$ for each $x\in D_{[0,1]}$, where $D_{[0,1]}$ is given by \eqref{SubsetOfCinfinityZeroIntheboundary01}. In fact, by integrating by parts and using the fact that for each $x\in D_{[0,1]}$ we have $x(0)=x(1)=0$  and
  \begin{eqnarray*}
    (ABx)(t)&=&\int\limits_{0}^{1} \left(\frac{t}{2}+\gamma_0 \right)\frac{(\xi_0+2\gamma_0)^3}{(s+2\gamma_0)^3}\lambda\left(\frac{s}{2}+\gamma_0 \right)x'(s)ds \\
         &=&\lambda \int\limits_{0}^{1} \left(\frac{t}{2}+\gamma_0\right)\frac{(\xi_0+2\gamma_0)^3}{ (s+2\gamma_0)^3 }x(s)ds=\lambda (Ax)(t),\\
    (A^2x)(t)&=&\int\limits_{0}^{1} \left(\frac{t}{2}+\gamma_0\right)\frac{(\xi_0+2\gamma_0)^3}{ (s+2\gamma_0)^3 }\left(\int\limits_{0}^{1} \left(\frac{s}{2}+\gamma_0\right)\frac{(\xi_0+2\gamma_0)^3}{ (\tau+2\gamma_0)^3 }x(\tau)d\tau\right)ds\\
           &=&2\int\limits_{0}^{1} \left(\frac{t}{2}+\gamma_0\right)\frac{(\xi_0+2\gamma_0)^3}{ (\tau+2\gamma_0)^3 }x(\tau)d\tau=2(Ax)(t), \\
    (BAx)(t)&=&\lambda \left(\frac{t}{2}+\gamma_0 \right)\int\limits_{0}^{1} \left(\frac{t}{2}+ \gamma_0\right)'_t\frac{(\xi_0+2\gamma_0)^3}{(s+2\gamma_0)^3}ds=\frac{\lambda}{2}(Ax)(t)\\
    (BA^2x)(t)&=&2(BAx)(t)=\lambda (Ax)(t).
  \end{eqnarray*}
  Thus, for each $x\in D_{[0,1]}$ we have $ABx=BA^2x$.
}\end{ex}

\begin{remark}{\rm
Let $A:C^\infty[0,1]\to C^\infty[0,1]$ and $B:C^\infty[0,1]\to C^\infty[0,1]$ defined as follows
  \begin{gather*}
    (Ax)(t)=  \int\limits_{0}^{1} \left(\frac{t}{2}+\gamma_0\right)\frac{(\xi_0+2\gamma_0)^3}{ (s+2\gamma_0)^3 }x(s)ds, \quad (Bx)(t)=\lambda \left(\frac{t}{2}+\gamma_0\right)\frac{dx}{dt},
  \end{gather*}
where $\lambda\in \mathbb{R}\setminus\{0\}$ and $\gamma_0 \in\mathbb{R}$. By using results from Example \ref{ExampleABBA2IntDiffOpABNotCommute} we have for all $x\in D_{[0,1]}$, where $D_{[0,1]}$ is given in \eqref{SetDCinfinityXofalphabetazero},
\begin{eqnarray*}
  (ABx)(t)&=&\lambda (Ax)(t),\quad (BAx)(t)= \frac{\lambda}{2}(Ax)(t), \\
  (A^2x)(t)&=&\int\limits_{0}^{1} \left(\frac{t}{2}+\gamma_0\right)\frac{(\xi_0+2\gamma_0)^3}{ (s+2\gamma_0)^3 }\left(\int\limits_{0}^{1} \left(\frac{s}{2}+\gamma_0\right)\frac{(\xi_0+2\gamma_0)^3}{ (\tau+2\gamma_0)^3 }x(\tau)d\tau\right)ds\\
           &=&\varphi(\gamma_0,\xi_0)\int\limits_{0}^{1} \left(\frac{t}{2}+\gamma_0\right)\frac{(\xi_0+2\gamma_0)^3}{ (\tau+2\gamma_0)^3 }x(\tau)d\tau=\varphi(\gamma_0,\xi_0)(Ax)(t),
\end{eqnarray*}
where
\begin{gather*}
  \varphi(\gamma_0,\xi_0)=\int\limits_{0}^{1} \frac{(\xi_0+2\gamma_0)^3}{(s+2\gamma_0)^3}\left(\frac{s}{2}+\gamma_0\right)ds=\frac{(\xi_0+2\gamma_0)^3}{4\gamma_0(2\gamma_0+1)}.
\end{gather*}
Therefore, for all $x\in D_{[0,1]}$ operators $A$ and $B$ satisfy the commutation relation
\begin{gather*}
  (ABx)(t)-(BA^2x)(t)=\frac{\lambda}{2}(2-\varphi(\gamma_0,\xi_0))(Ax)(t).
\end{gather*}
 Since  $\lambda\not=0$ and if $A\not=0$ then  for all $x\in D_{[0,1]}$ the equality $(ABx)(t)=(BA^2x)(t)$ holds true  if and only if $\varphi(\gamma_0,\xi_0)=2$,
that is, $(\xi_0+2\gamma_0)^3=8\gamma_0(1+2\gamma_0)$. Moreover, for some constant $\tilde{\delta}$, some positive integer $\tilde{n}$ and for each $x\in D_{[0,1]}$ we have
\begin{gather*}
  (ABx)(t)-\tilde{\delta}(BA^{\tilde{n}}x)(t)= \left(1-\frac{[\varphi(\gamma_0,\xi_0)]^{\tilde{n}-1}}{2}\cdot \tilde{\delta}\right)     \lambda(Ax)(t).
\end{gather*}
}\end{remark}

\begin{lemma}\label{lemDiffIntOpBAn}
Consider linear operators $A:C^{\infty}[\alpha,\beta]\to C^{\infty}[\alpha,\beta]$,  $B:C^{\infty}[\alpha,\beta]\to C^{\infty}[\alpha,\beta]$
defined as follows
\begin{gather*}
 (Ax)(t)=a(t)\frac{dx}{dt},\qquad (Bx)(t)= \int\limits_{\alpha}^{\beta} k(t,s)x(s)ds,
\end{gather*}
where $\alpha,\beta$ are real numbers, $k(\cdot, \cdot)\in C^{\infty}([\alpha,\beta]^2)$, $ a(\cdot)\in C^{\infty}[\alpha,\beta]$.
We put
\begin{gather*}
  k_0(t,s)=k(t,s), \quad   
  k_m(t,s)=\frac{\partial }{\partial s}\left[a(s)k_{m-1}(t,s) \right],\quad m=1,\ldots,n. 
\end{gather*}
Then, for each non negative integer $n$ we have
\begin{gather*}
  (BA^nx)(t)=\sum_{i=0}^{n-1} (-1)^i k_i(t,s)a(s)(A^{n-1-i}x)(s)\Big|_{s=\alpha}^{s=\beta}+(-1)^n\int\limits_{\alpha}^{\beta} k_n(t,s)x(s)ds,
\end{gather*}
where
\begin{gather*}
  g(t,s)\Big|_{s=\alpha}^{s=\beta}=g(t,\beta)-g(t,\alpha) \mbox{ and } A^0\ \mbox{ is the identity operator}.
\end{gather*}
\end{lemma}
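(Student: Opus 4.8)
The plan is to argue by induction on $n$, the engine being integration by parts, exactly as in the proof of Theorem~\ref{thmIntDiffOp}. Since $a(\cdot)\in C^{\infty}[\alpha,\beta]$ and $k(\cdot,\cdot)\in C^{\infty}([\alpha,\beta]^2)$, all the operators $A^n$ and $B$ map $C^{\infty}[\alpha,\beta]$ into itself, every kernel $k_m(t,s)$ produced by the recursion $k_m=\partial_s[a\,k_{m-1}]$ is again smooth, and each integration by parts below is legitimate. For the base case $n=0$ the asserted boundary sum $\sum_{i=0}^{-1}$ is empty and the formula reduces to $(Bx)(t)=\int_\alpha^\beta k_0(t,s)x(s)\,ds$, which is just the definition of $B$ together with $k_0=k$ and $A^0=\mathrm{id}$.

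For the inductive step I would assume the identity for some $n\ge 0$ and compute $BA^{n+1}x=BA^n(Ax)$ by applying the induction hypothesis to the function $Ax$. Using $A^{n-1-i}(Ax)=A^{n-i}x$ in the boundary terms, this immediately yields
\[
(BA^{n+1}x)(t)=\sum_{i=0}^{n-1}(-1)^i k_i(t,s)a(s)(A^{n-i}x)(s)\Big|_{s=\alpha}^{s=\beta}+(-1)^n\int_\alpha^\beta k_n(t,s)(Ax)(s)\,ds.
\]
The only term left to process is the integral, into which I substitute $(Ax)(s)=a(s)x'(s)$ and integrate by parts:
\[
\int_\alpha^\beta k_n(t,s)a(s)x'(s)\,ds=k_n(t,s)a(s)x(s)\Big|_{s=\alpha}^{s=\beta}-\int_\alpha^\beta \frac{\partial}{\partial s}\big[a(s)k_n(t,s)\big]x(s)\,ds.
\]
Here $\partial_s[a\,k_n]=k_{n+1}$ by the defining recursion, and $x=A^0x$, so the boundary piece is precisely the missing $i=n$ summand while $-(-1)^n=(-1)^{n+1}$ turns the remaining integral into the required $k_{n+1}$ term.

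Collecting the $i=0,\dots,n-1$ boundary terms with the new $i=n$ term gives $\sum_{i=0}^{n}(-1)^i k_i(t,s)a(s)(A^{n-i}x)(s)|_{s=\alpha}^{s=\beta}$, i.e. the boundary sum for $n+1$, and the integral becomes $(-1)^{n+1}\int_\alpha^\beta k_{n+1}(t,s)x(s)\,ds$, which completes the induction. I do not anticipate a genuine obstacle: the content is entirely routine once the induction is set up, and the only thing requiring care is the bookkeeping of indices and signs — in particular verifying that the boundary term produced by each integration by parts slots in exactly as the new top index $i=n$ of the sum, with the shift $A^{n-1-i}(Ax)=A^{n-i}x$ keeping the powers of $A$ aligned.
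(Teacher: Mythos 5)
Your proposal is correct and follows essentially the same argument as the paper: induction on $n$, applying the inductive hypothesis to $Ax$ and integrating the remaining term $\int_\alpha^\beta k_n(t,s)a(s)x'(s)\,ds$ by parts so that the boundary term becomes the new $i=n$ summand and $\partial_s[a\,k_n]=k_{n+1}$ supplies the new integral with sign $(-1)^{n+1}$. The only cosmetic difference is that the paper also works out the $n=1$ case explicitly before the inductive step, whereas you start the induction directly from the (empty-sum) case $n=0$, which is slightly cleaner.
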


\begin{proof}
We proceed by induction. When $n=0$ it is trivial. For $n=1$, we integrate by parts. So we get
\begin{eqnarray*}
  (BAx)(t)&=&\int\limits_{\alpha}^{\beta} k(t,s) a(s)x'(s)ds=k(t,s)a(s)x(s) \Big|_{s=\alpha}^{s=\beta} -\int\limits_{\alpha}^{\beta} \frac{\partial}{\partial s}[k(t,s) a(s)] x(s)ds=\\
  &=&k_0(t,s)a(s)x(s)\Big|_{s=\alpha}^{s=\beta}-\int\limits_{\alpha}^{\beta} k_1(t,s) x(s)ds.
  \end{eqnarray*}



Suppose that for some integer $n>1$
\begin{gather*}
  (BA^nx)(t)=\sum_{i=0}^{n-1} (-1)^i k_i(t,s)a(s)(A^{n-1-i}x)(s)\Big|_{s=\alpha}^{s=\beta}+(-1)^n\int\limits_{\alpha}^{\beta} k_n(t,s)x(s)ds.
\end{gather*}
Then, we have
\begin{eqnarray*}
  (BA^{n+1}x)(t)&=&(BA^n)(Ax)(t)=\sum_{i=0}^{n-1} k_i(t,s)a(s)(A^{n-1-i})(Ax)(s)\Big|_{s=\alpha}^{s=\beta}+\\
 & & +(-1)^n\int\limits_{\alpha}^{\beta} k_n(t,s)(Ax)(s)ds
  =\sum_{i=0}^{n-1} (-1)^ik_i(t,s)a(s)(A^{n-i}x)(s)\Big|_{s=\alpha}^{s=\beta}+ \\
  & & +(-1)^n\int\limits_{\alpha}^{\beta} k_n(t,s)a(s)x'(s)ds
  =\sum_{i=0}^{n-1}(-1)^i k_i(t,s)a(s)(A^{n-i}x)(s)\Big|_{s=\alpha}^{s=\beta}+\\
  & &+(-1)^n k_n(t,s)a(s)x(s)\Big|_{s=\alpha}^{s=\beta}
  -(-1)^n\int\limits_{\alpha}^{\beta} k_{n+1}(t,s)x(s)ds=\\
  &=&\sum_{i=0}^{n} (-1)^i k_i(t,s)a(s)(A^{n-i}x)(s)\Big|_{s=\alpha}^{s=\beta}+(-1)^{n+1}\int\limits_{\alpha}^{\beta} k_{n+1}(t,s)x(s)ds.
\end{eqnarray*}
This completes the proof. \qed
\end{proof}

\begin{lemma}\label{lemRepreAB=BAnDiffIntOP}
Let $A:C^{\infty}[\alpha,\beta]\to C^{\infty}[\alpha,\beta]$ and  $B:C^{\infty}[\alpha,\beta]\to C^{\infty}[\alpha,\beta]$ be linear operators
defined as follows
\begin{gather*}
 (Ax)(t)=a(t)\frac{dx}{dt},\qquad (Bx)(t)= \int\limits_{\alpha}^{\beta} k(t,s)x(s)ds,
\end{gather*}
where $\alpha,\beta\in \mathbb{R}$, $k(\cdot, \cdot)\in C^{\infty}([\alpha,\beta]^2)$, $ a(\cdot)\in C^{\infty}[\alpha,\beta]$.
We put
\begin{gather*}
  k_0(t,s)=k(t,s), \quad   
  k_m(t,s)=\frac{\partial}{\partial s}\left[a(s)k_{m-1}(t,s) \right],\quad m=1,\ldots,n. 
\end{gather*}
Then for some positive integer $n$ and for all $x\in C^\infty[\alpha,\beta]$ the equality $(ABx)(t)=(BA^nx)(t)$ holds true  if and only if
\begin{eqnarray*}
\int\limits_{\alpha}^{\beta} \frac{\partial k(t,s)}{\partial t} x(s)ds&=&  \sum_{i=0}^{n-1} (-1)^i
k_i(t,s)a(s)(A^{n-1-i}x)(s)\Big|_{s=\alpha}^{s=\beta}+\\
&+&(-1)^n\int\limits_{\alpha}^{\beta} k_n(t,s)x(s)ds.
\end{eqnarray*}
\end{lemma}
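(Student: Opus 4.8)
The plan is to read off both sides of the relation $ABx=BA^nx$ in explicit form and then to observe that the asserted \emph{if and only if} is simply this operator identity rewritten pointwise. The genuinely laborious part---unwinding $BA^n$ through repeated integration by parts---has already been established in Lemma~\ref{lemDiffIntOpBAn}, so the remaining work is one differentiation under the integral sign followed by a direct comparison. Accordingly, I would split the argument into computing $(ABx)(t)$ by hand and importing $(BA^nx)(t)$ verbatim from the preceding lemma.

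For the left-hand side, applying $B$ first gives $(Bx)(t)=\int_{\alpha}^{\beta}k(t,s)x(s)\,ds$. Since $k\in C^{\infty}([\alpha,\beta]^2)$, both $k$ and all of its partial derivatives in $t$ are continuous on the compact square $[\alpha,\beta]^2$, so the Leibniz rule for differentiation under the integral sign applies; composing with $A$ then yields
\begin{gather*}
  (ABx)(t)=a(t)\,\frac{d}{dt}\!\left(\int_{\alpha}^{\beta}k(t,s)\,x(s)\,ds\right)=a(t)\int_{\alpha}^{\beta}\frac{\partial k(t,s)}{\partial t}\,x(s)\,ds.
\end{gather*}
For the right-hand side I would invoke Lemma~\ref{lemDiffIntOpBAn} with the same iterated kernels $k_m(t,s)=\partial_s[\,a(s)k_{m-1}(t,s)\,]$, which supplies
\begin{gather*}
  (BA^nx)(t)=\sum_{i=0}^{n-1}(-1)^i k_i(t,s)\,a(s)\,(A^{n-1-i}x)(s)\Big|_{s=\alpha}^{s=\beta}+(-1)^n\int_{\alpha}^{\beta}k_n(t,s)\,x(s)\,ds.
\end{gather*}

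Equating these two expressions then finishes the proof: for every fixed $x\in C^{\infty}[\alpha,\beta]$, the equality $(ABx)(t)=(BA^nx)(t)$ holds precisely when the two displayed right-hand sides coincide, which is exactly the condition asserted in the statement, with the boundary contributions collected in the finite sum and the surviving integral term carrying the factor $(-1)^n k_n$. I do not expect a substantive obstacle here: the only analytic point needing justification is the interchange of differentiation and integration, and that is immediate from the smoothness of $k$ on the compact domain; everything delicate---the sign pattern $(-1)^i$ and the telescoping of the boundary terms $k_i(t,s)a(s)(A^{n-1-i}x)(s)\big|_{\alpha}^{\beta}$---has already been discharged by the induction in Lemma~\ref{lemDiffIntOpBAn}, so this statement is essentially a clean corollary of that computation.
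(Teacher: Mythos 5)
You take the same route as the paper: compute $(ABx)(t)$ by differentiating under the integral sign (justified by the smoothness of $k$ on the compact square) and import $(BA^nx)(t)$ verbatim from Lemma~\ref{lemDiffIntOpBAn}; the paper's proof consists of exactly these two steps.

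However, your final identification contains a genuine mismatch that you should not pass over. Your computation---which is the correct one, since $A$ multiplies by $a(t)$ after differentiating---gives
\begin{equation*}
  (ABx)(t)=a(t)\int\limits_{\alpha}^{\beta}\frac{\partial k(t,s)}{\partial t}\,x(s)\,ds ,
\end{equation*}
whereas the left-hand side of the displayed condition in the lemma is $\int_{\alpha}^{\beta}\frac{\partial k(t,s)}{\partial t}\,x(s)\,ds$ with no factor $a(t)$. These are not the same condition unless $a\equiv 1$ (or one restricts to points where $a(t)\not=0$ and divides, which changes the statement), so your closing claim that equating the two expressions yields ``exactly the condition asserted in the statement'' is false as written. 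The source of the discrepancy is the paper itself: its proof writes $(ABx)(t)=\frac{d}{dt}\int_{\alpha}^{\beta}k(t,s)x(s)\,ds$, silently dropping the factor $a(t)$, so the lemma's stated condition inherits that omission; yet the subsequent Theorem~\ref{ThmRepDiffIntOp} reinstates the factor, its condition \eqref{CondMainThmNessSufCondAB=BFADiffIntOP} being $a(t)\,\frac{\partial k(t,s)}{\partial t}=\sum\limits_{m=1}^{n}(-1)^m\delta_m k_m(t,s)$, which is consistent with your computation and not with the lemma as printed. So what you have actually proved is the corrected statement, with $a(t)$ multiplying the left-hand integral; your write-up should either flag this as an erratum in the lemma or explain why the factor may be dropped---and it may not be, since for general $a$ the two conditions genuinely differ.
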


\begin{proof}
 Let $x\in C^\infty{}[\alpha,\beta]$. By Lemma \ref{lemDiffIntOpBAn},
  \begin{gather*}
    (BA^nx)(t)=\sum_{i=0}^{n-1} (-1)^i k_i(t,s)a(s)(A^{n-1-i}x)(s)\Big|_{s=\alpha}^{s=\beta}+(-1)^n\int\limits_{\alpha}^{\beta} k_n(t,s)x(s)ds.
   \end{gather*}
Since $(ABx)(t)=\frac{d}{dt}\left(\int\limits_{\alpha}^{\beta} k(t,s)x(s)ds \right)=\int\limits_{\alpha}^{\beta} \frac{\partial k(t,s)}{\partial t} x(s)ds,$ then $(ABx)(t)=(BA^nx)(t) $ for all $x\in C^\infty[\alpha,\beta]$ if and only if
  \begin{eqnarray*}
\int\limits_{\alpha}^{\beta} \frac{\partial k(t,s)}{\partial t} x(s)ds&=&  \sum_{i=0}^{n-1} (-1)^i
k_i(t,s)a(s)(A^{n-1-i}x)(s)\Big|_{s=\alpha}^{s=\beta}+ \\
&+& (-1)^n\int\limits_{\alpha}^{\beta} k_n(t,s)x(s)ds.
  \end{eqnarray*}
This completes the proof. \qed
\end{proof}

\begin{theorem}\label{ThmRepDiffIntOp}
Let $A:C^{\infty}[\alpha,\beta]\to C^{\infty}[\alpha,\beta]$ and  $B:C^{\infty}[\alpha,\beta]\to C^{\infty}[\alpha,\beta]$ be linear operators defined as
\begin{gather*}
 (Ax)(t)=a(t)\frac{dx}{dt},\qquad (Bx)(t)= \int\limits_{\alpha}^{\beta} k(t,s)x(s)ds,
\end{gather*}
where $\alpha,\beta\in\mathbb{R}$, $k(\cdot, \cdot)\in C^{\infty}([\alpha,\beta]^2)$, $ a\in C^{\infty}[\alpha,\beta]$
such that $a(\alpha)=a(\beta)=0$.
Consider a polynomial $F(t)=\sum\limits_{i=1}^{n}\delta_i t^i$, where $\delta_1,\ldots,\delta_n\in\mathbb{R}$.
Put
\begin{gather*}
  k_0(t,s)=k(t,s) \quad   
  k_m(t,s)=\frac{\partial}{\partial s}\left[a(s)k_{m-1}(t,s) \right],\quad m=1,\ldots,n. 
\end{gather*}
Then $AB=BF(A)$ if and only if

  \begin{gather}\label{CondMainThmNessSufCondAB=BFADiffIntOP}
  \left\{\begin{array}{c}\di
           a(t)\frac{\partial k(t,s)}{\partial t}=\sum_{m=1}^{n}(-1)^{m} \delta_{m} k_{m}(t,s)  \\ \\ \di
           a(\alpha )=a(\beta)=0.
         \end{array} \right.
\end{gather}
\end{theorem}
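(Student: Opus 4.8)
The plan is to rewrite both sides of $AB=BF(A)$ as integral operators with explicit kernels and then compare them. Since $F(t)=\sum_{i=1}^{n}\delta_i t^i$ has zero constant term, linearity gives $BF(A)x=\sum_{m=1}^{n}\delta_m (BA^m x)$, so the first step is to evaluate each $BA^m x$ by Lemma \ref{lemDiffIntOpBAn}, which yields
\[
(BA^m x)(t)=\sum_{i=0}^{m-1}(-1)^i k_i(t,s)a(s)(A^{m-1-i}x)(s)\Big|_{s=\alpha}^{s=\beta}+(-1)^m\int_\alpha^\beta k_m(t,s)x(s)\,ds.
\]
The decisive observation is that every boundary term carries the explicit factor $a(s)$ evaluated at the endpoints $s=\alpha$ and $s=\beta$; by the standing hypothesis $a(\alpha)=a(\beta)=0$ all of these terms vanish. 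Hence $(BA^m x)(t)=(-1)^m\int_\alpha^\beta k_m(t,s)x(s)\,ds$, and summing over $m$ gives
\[
(BF(A)x)(t)=\int_\alpha^\beta\Big(\sum_{m=1}^{n}(-1)^m\delta_m k_m(t,s)\Big)x(s)\,ds.
\]

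Next I would compute the left-hand side directly. Because $(Ax)(t)=a(t)\,x'(t)$ and $k$ is smooth, differentiation under the integral sign is justified and
\[
(ABx)(t)=a(t)\frac{d}{dt}\int_\alpha^\beta k(t,s)x(s)\,ds=\int_\alpha^\beta a(t)\frac{\partial k(t,s)}{\partial t}\,x(s)\,ds.
\]
Comparing the two expressions, $ABx=BF(A)x$ holds for every $x\in C^\infty[\alpha,\beta]$ if and only if
\[
\int_\alpha^\beta\Big(a(t)\frac{\partial k(t,s)}{\partial t}-\sum_{m=1}^{n}(-1)^m\delta_m k_m(t,s)\Big)x(s)\,ds=0
\]
for all $x\in C^\infty[\alpha,\beta]$ and every fixed $t\in[\alpha,\beta]$.

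Finally, for each fixed $t$ the bracketed quantity is a continuous function of $s$, and the displayed integral defines a continuous linear functional of $x$ that annihilates the dense class $C^\infty[\alpha,\beta]$ (which contains $C_c^\infty(\alpha,\beta)$). Invoking \cite[Corollary 4.23]{BrezisFASobolevSpaces} together with Lemma \ref{LemmaCLFVanishCompleteSetVanishUniverse}, exactly as in the proof of Theorem \ref{thmIntDiffOp}, forces the integrand to vanish identically, which is precisely the first equation in \eqref{CondMainThmNessSufCondAB=BFADiffIntOP}; the relations $a(\alpha)=a(\beta)=0$ are part of the hypotheses and are simply carried along in the equivalence. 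I expect the only delicate point to be the vanishing of the boundary terms, which relies entirely on the factor $a(s)$ present in each summand of Lemma \ref{lemDiffIntOpBAn} and on $a(\alpha)=a(\beta)=0$; the remaining step is the same fundamental-lemma/density argument already used earlier in the paper.
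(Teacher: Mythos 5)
Your proposal is correct and follows essentially the same route as the paper's proof: evaluate $BA^mx$ via Lemma \ref{lemDiffIntOpBAn}, use $a(\alpha)=a(\beta)=0$ to annihilate the boundary terms, and then pass from the integral identity to the pointwise kernel identity using \cite[Corollary 4.23]{BrezisFASobolevSpaces} together with Lemma \ref{LemmaCLFVanishCompleteSetVanishUniverse}. The only (cosmetic) difference is that you compute $(ABx)(t)$ directly instead of citing Lemma \ref{lemRepreAB=BAnDiffIntOP}, and in doing so you correctly retain the factor $a(t)$ in $\int_\alpha^\beta a(t)\frac{\partial k(t,s)}{\partial t}x(s)\,ds$, which the paper's intermediate display omits by an evident typo, so your version actually matches condition \eqref{CondMainThmNessSufCondAB=BFADiffIntOP} more cleanly.
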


\begin{proof}
Lemma \ref{lemDiffIntOpBAn}, Lemma \ref{lemRepreAB=BAnDiffIntOP} and $a(\alpha)=a(\beta)=0$ imply that
 $AB=BF(A)$ if and only if
 \begin{gather}\label{ProofCondiThmReprAdiffOpB}
   \forall x\in C^\infty[\alpha,\beta],\quad
\int\limits_{\alpha}^{\beta} \frac{\partial k(t,s)}{\partial t} x(s)ds= \int\limits_{\alpha}^{\beta} \sum_{m=1}^{n} (-1)^m \delta_m k_m(t,s)x(s)ds.
 \end{gather}
By \cite[Corollary 4.23]{BrezisFASobolevSpaces} and Lemma \ref{LemmaCLFVanishCompleteSetVanishUniverse}, the condition
 \eqref{ProofCondiThmReprAdiffOpB} is equivalent to \eqref{CondMainThmNessSufCondAB=BFADiffIntOP}.
\qed
\end{proof}

\begin{ex}{\rm
Consider operators $A: C^\infty[0,1]\to C^\infty[0,1]$ and $B:C^\infty[0,1]\to C^\infty[0,1]$ defined as follows
\begin{gather*}
  (Ax)(t)=a(t)\frac{dx}{dt},\quad (Bx)(t)=\int\limits_{0}^{1}b(t)c(s)x(s)ds,
\end{gather*}
where $a,b,c\in C^\infty[0,1]$. We will look for functions $a,b,c$ such that $AB=BA^2$. According to Theorem \ref{ThmRepDiffIntOp}, we have $AB=BA^2$ if and only if
\begin{gather*}
  \left\{\begin{array}{l}\di
           a(t)\frac{\partial b(t)c(s)}{\partial t}= (-1)^2 \frac{\partial}{\partial s}\big(a(s)\frac{\partial}{\partial s}(a(s)b(t)c(s))\big)  \\ \\
        a(0)=a(1)=0
         \end{array} \right.
\end{gather*}
So we have
\begin{gather*}
\left\{\begin{array}{cc}  a(t)b'(t)c(s)=b(t)( 3a(s)a'(s)c'(s)+a'(s)^2c(s)+a''(s)a(s)c(s)+a(s)^2c''(s)) \\ \\
           a(0)=a(1)=0
 \end{array}\right.
\end{gather*}
Suppose that $a(t)\neq0$,  $b(t)\neq0$, $c(t)\neq0$ for all $t\in ]0,1[$. Then
 we have
\begin{equation*}
 \frac{ a(t)b'(t)}{b(t)}=\frac{ 3a(s)a'(s)c'(s)+a'(s)^2c(s)+a''(s)a(s)c(s)+a(s)^2c''(s)}{c(s)}=\lambda \end{equation*}
where $\lambda$ is a real constant. We get the following equations:
\begin{gather} \label{FunctionbFromExDiffIntOp}
  \frac{\lambda}{a(t)}=\frac{b'(t)}{ b(t)}, \\ \label{FunctionsacFromExDiffIntOp}
  \frac{ 3a(s)a'(s)c'(s)+a'(s)^2c(s)+a''(s)a(s)c(s)+a(s)^2c''(s)}{c(s)}=\lambda.
\end{gather}
From \eqref{FunctionbFromExDiffIntOp} we get $b(t)=\lambda_1\exp(\int\frac{\lambda}{a(t)}dt)$, $\lambda_1$ is a constant. By putting $e(t)=\frac{c'(t)}{c(t)}$ in \eqref{FunctionsacFromExDiffIntOp}, this reduces to
\begin{equation*}
  a(s)^2e'(s)+3a(s)a'(s)e(s)+a(s)^2e(s)^2+a'(s)^2+a''(s)a(s)-\lambda=0.
\end{equation*}
If $\lambda<0$, $a(t)=\sqrt{\lambda t(t-1)},$ $0<t<1,$ then $$e(s)=\frac{1}{(s(s-1))^{3/2}\lambda_2-4s^3+6s^2-2s},\ 0<s<1,$$
where $\lambda_2\in\mathbb{R}\setminus\{0\}$. Therefore,
$e(s)=\frac{c'(s)}{c(s)}\Longrightarrow c(s)=\lambda_3 \exp\big(\int e(s)ds\big),$
where $\lambda_3\in\mathbb{R}$.
}\end{ex}

\section*{Acknowledgments}
This work was supported by the Swedish International Development Cooperation
Agency (Sida) bilateral program with  Mozambique. Domingos thanks to Dr. Yury Nepomnyashchkh for useful comments  and is also grateful to the
Mathematics and Applied Mathematics research environment,
Division of Mathematics and Physics, School of Education, Culture and Communication,
M\"alardalens University for creating a research and educational environment.

\end{document}